\newcommand{\mc}{\mathcal}
\newcommand{\cp}{\times}
\newcommand{\Norm}[1]{\left\lvert\left\lvert #1 \right\rvert\right\rvert}
\newcommand{\bol}{\boldsymbol}
\newcommand{\abs}[1]{\left\lvert{#1}\right\rvert}
\newcommand{\lr}[1]{\left({#1}\right)}
\newcommand{\mf}{\mathfrak}
\newcommand{\p}{\partial}
\newcommand{\ti}[1]{\textit{#1}}
\newcommand{\tb}[1]{\textbf{#1}}
\newtheorem{remark}{\textit{Remark}}%[section]
\newtheorem{theorem}{\textit{Theorem}}%[section]
\newtheorem{proposition}{\textit{Proposition}}%[section]
\newtheorem{lemma}{\textit{Lemma}}%[section]
\begin{document}

\title{Nested invariant tori foliating a vector field and its curl:\\ toward MHD equilibria and steady Euler flows \\in toroidal domains without continuous Euclidean isometries}
\author[1]{Naoki Sato} 
\author[2]{Michio Yamada}
\affil[1]{Graduate School of Frontier Sciences, \protect\\ The University of Tokyo, Kashiwa, Chiba 277-8561, Japan \protect\\ Email: sato\_naoki@edu.k.u-tokyo.ac.jp}
%\affil[2]{Graduate School of Frontier Sciences, \protect\\ The University of Tokyo, Kashiwa, Chiba 277-8561, Japan
%\protect \\ Email: saito@ppl.k.u-tokyo.ac.jp}
\affil[2]{Research Institute for Mathematical Sciences, \protect\\ Kyoto University, Kyoto 606-8502, Japan
\protect \\ Email: yamada@kurims.kyoto-u.ac.jp}
\date{\today}
\setcounter{Maxaffil}{0}
\renewcommand\Affilfont{\itshape\small}

%\twocolumn[
 % \begin{@twocolumnfalse}
    \maketitle
    \begin{abstract}
    This paper studies the problem of finding a 
    three-dimensional solenoidal vector field such that both the vector field and its curl are tangential to a given family of toroidal surfaces. We show that this  question can be translated into the problem of determining a periodic 
    solution with periodic derivatives of a two-dimensional linear elliptic second-order partial differential equation on each toroidal surface, and prove the existence of smooth solutions. Examples of smooth solutions foliated by toroidal surfaces that are not invariant under continuous Euclidean isometries are also constructed explicitly, and they are identified as  equilibria of anisotropic magnetohydrodynamics. 
    The problem examined here represents a weaker version of a fundamental mathematical problem  
    that arises in the context of magnetohydrodynamics and fluid mechanics concerning the existence of  regular equilibrium magnetic fields and steady Euler flows in bounded domains without continuous Euclidean isometries. The existence of such configurations  represents a key theoretical issue for the design of the confining magnetic field in nuclear fusion reactors known as stellarators.  
    %, with the additional requirement that the curl of such vector field is also tangential to the same set of toroidal surfaces.  
    \end{abstract}
%\vspace{5mm}
%\end{@twocolumnfalse}
%  ]

\section{Introduction}

%Lower semi??????

%Figure of nested invariant tori with B and J

%DO WE REALLY NEED THE ANALYTIC PROPERTY???

%Mention application to vorticity inversion

This paper is concerned with the equation
\begin{equation}
\left[\lr{\nabla\cp\bol{w}}\cp\bol{w}\right]\cp\nabla\Psi=\bol{0},~~~~\nabla\cdot\bol{w}=0~~~~{\rm in}~~\Omega.\label{eq1}
\end{equation}
Here, the unknown $\bol{w}\lr{\bol{x}}$ is a three-dimensional vector field with Cartesian components $w_i$, $i=1,2,3$, defined in a smooth  toroidal domain $\Omega\subset\mathbb{R}^3$ 
foliated by nested toroidal surfaces corresponding to level sets of a smooth function $\Psi\lr{\bol{x}}$
such that the bounding surface is given by $\p\Omega=\left\{\bol{x}\in\mathbb{R}^3:\Psi=\Psi_0\in\mathbb{R}\right\}$.

Equation \eqref{eq1} has the following physical meaning: 
given a set of nested toroidal surfaces $\Psi$ in the domain $\Omega$, 
can one always
find a magnetic field $\bol{B}=\bol{w}$ and an electric current $\bol{J}=\nabla\cp\bol{w}$ 
that are tangent to the level sets of $\Psi$? 
In a more general interpretation of \eqref{eq1}, both the function $\Psi$ and the shape of the
toroidal volume $\Omega$ can be treated as an unknown variables as well.

In the context of plasma physics, the problem posed by equation \eqref{eq1} represents a 
generalization of a more difficult equation, namely the magnetohydrodynamic equilibrium equation 
\begin{equation}
\lr{\nabla\cp\bol{w}}\cp\bol{w}=\nabla\Psi,~~~~\nabla\cdot\bol{w}=0~~~~{\rm in}~~{\Omega},\label{eq2} 
\end{equation}
where $P=\Psi$ plays the role of the pressure field.  %and it is usually assumed constant on the boundary.   
This equation describes an equilibrium state where the Lorentz force is exactly balanced by the pressure force, and its solution is crucial for the design of confining magnetic fields in nuclear fusion reactors \cite{Kruskal}. 
Notice that any solution of \eqref{eq2} is also a solution of \eqref{eq1}. 
Equation \eqref{eq2} also occurs in fluid mechanics, where it describes a steady Euler flow with velocity
field $\bol{v}=\bol{w}$ and mechanical pressure $P=-\Psi-\frac{1}{2}\bol{v}^2$ \cite{Moffatt85}.

The challenge posed by equation \eqref{eq2} 
is exemplified by the unavailability of a general theory 
concerning the existence of solutions  
\cite{LoSurdo}, although steady progress has been made since the original inception of the problem.  
As a consequence, it is not known whether regular steady fluid flows or equilibrium magnetic fields \eqref{eq2} exist in a bounded domain $\Omega$ of arbitrary shape. 
The intrinsic mathematical difficulty   
behind equation \eqref{eq2} can be understood 
in terms of characteristic surfaces. 
Indeed, if considered as a system of nonlinear first order 
partial differential equations for the unknowns $\bol{w},\Psi$, the characteristic surfaces $S$ of 
equation \eqref{eq2} are determined by the characteristic equation \cite{Yos90}
\begin{equation}
\lr{\nabla S}^2\lr{\bol{w}\cdot\nabla S}^2=0.
\end{equation}
%To get a clear picture of the problem, 
Hence, equation \eqref{eq2} exhibits a mixed behavior, being twice elliptic
and twice hyperbolic, with the nontrivial characteristic surfaces $\lr{\bol{w}\cdot\nabla S}^2=0$ associated with hyperbolicity depending on the unknwon $\bol{w}$. These features make \eqref{eq2} 
one of the hardest partial differential equations in mathematical physics. 

Weak solutions of \eqref{eq2} have been proposed where the function $\Psi$ has a stepped profile \cite{Bruno,EncisoMHD}. In this approach, 
the toroidal domain $\Omega$ is partitioned in a set of  nested 
toroidal subdomains $\Omega_i\subset\Omega$, $i=1,...,N$,    
where the function $\Psi=\psi_i\in\mathbb{R}$ is constant, while the total pressure $\Psi+\bol{w}^2/2$ 
is continuous across the boundary separating adjacent subdomains. This construction has the advantage
that in each subdomain $\Omega_i$ equation \eqref{eq2} reduces
to the eigenvalue problem for the curl operator, 
a system of linear first order partial differential equations for which strong solutions are available  \cite{YosGiga}. However, this comes at the price of 
reduced regularity of solutions, which fall in the class $L^2\lr{\Omega}$. 
%Dropping the requirement that $\Psi$ is constant on $\p\Omega$ and enforcing boundary conditions $\bol{w}\cdot\bol{n}=0$ on $\p\Omega$, with $\bol{n}$ the unit outward normal to the bounding surface, 
In a slightly different setting where $\Psi$ is not required to be constant on $\p\Omega$, nontrivial strong solutions of \eqref{eq2} in the class $H^1\lr{\Omega}$ have been reported in  \cite{ConstantinPasqualotto22}.  
These solutions are obtained as steady states of a Voigt approximation scheme of the time-dependent  viscous non-resistive incompressible magnetohydrodynamics equations in the limit $t\rightarrow\infty$.  
%The Voigt approximation scheme also suggests 
%that solutions with higher regularity may exist as well.  

It is well known that equation \eqref{eq2} is greatly simplified whenever the vector field $\bol{w}$ and the function $\Psi$ are invariant under 
a continuous Euclidean isometry, i.e. a continuous transformation of three-dimensional space that preserves the Euclidean distance $ds^2=dx^2+dy^2+dz^2$ between points.  
Such transformations 
are characterized by a vector field $\bol{\eta}=\bol{a}+\bol{b}\cp\bol{x}$, with $\bol{a},\bol{b}\in\mathbb{R}^3$,  
representing the general solution of the equation $\mf{L}_{\bol{\eta}}ds^2=0$, where $\mf{L}$ denotes the Lie-derivative, and physically correspond to combinations of translations (generated by $\bol{a}$) and rotations (generated by $\bol{b}$).    
In the context of plasma physics, invariance under a continuous Euclidean isometry is usually referred to as a symmetry of the system. 
In formulae, the vector field $\bol{w}$ and the function $\Psi$ are invariant under a continuous Euclidean isometry whenever constant vectors $\bol{a},\bol{b}\in\mathbb{R}^3$ with $\bol{a}^2+\bol{b}^2\neq\bol{0}$ exist such that
\begin{equation}
\mf{L}_{\bol{a}+\bol{b}\cp\bol{x}}\bol{w}=\bol{0},~~~~\mf{L}_{\bol{a}+\bol{b}\cp\bol{x}}\Psi=0.\label{sym}
\end{equation}
When condition \eqref{sym} holds, equation 
\eqref{eq2} can be reduced to the Grad-Shafranov equation \cite{Eden1,Eden2}, a nonlinear second order elliptic partial differential equation for the unknown $\Psi$,  
which is assumed to satisfy Dirichlet boundary conditions on $\p\Omega$. 
Regular solutions of the Grad-Shafranov equation 
can be obtained in accordance with the theory of second order elliptic partial differential equations, thus providing regular (symmetric) solutions of \eqref{eq2}. 
Notice that in this setting the symmetry of $\Psi$ implies
the symmetry of the bounding surface $\p\Omega$, 
which corresponds to a level set of $\Psi$. 

Unfortunately, the presence of a continuous Euclidean isometry \eqref{sym} is a 
special requirement that does not apply 
in several situations of practical interest.
In particular, the confining magnetic field in nuclear fusion reactors known as stellarators 
sacrifices axial symmetry in favor of a pronounced
field line twist that aims at minimizing
plasma losses at the vessel boundary $\p\Omega$ caused by cross-field dynamics of charged particles \cite{Hel}. 
In this context, it is therefore 
necessary to understand the existence of 
solutions of \eqref{eq2} that are not endowed with continuous Euclidean symmetries. 
For completeness, it should be emphasized that even if such `asymmetric' solutions exist, they would not necessarily work as confining magnetic fields, because other requirements, such as quasisymmetry \cite{Rod}
and a small electric current, 
must be enforced on $\bol{w}$. 

The nontrivial geometrical constraints on $\bol{w}$ and $\Psi$ required for the existence of solutions of equation \eqref{eq2} in toroidal domains without continuous Euclidean symmetries have also raised 
the possibility that regular solutions of this kind  may not exist: according to the Grad conjecture \cite{Grad}, only `configurations of great geometrical symmetry' would produce well behaved equilibria. In modern plasma physics, this idea 
is usually understood as equation \eqref{sym} being a 
necessary condition for the existence of regular solutions of \eqref{eq2}.
Although Grad's conjecture remains unsolved, 
Arnold's structure theorem \cite{Arnold} 
provides a topological characterization of the field lines of any  analytic solution of \eqref{eq2} such that $\bol{w}$ and $\nabla\cp\bol{w}$ are not everywhere collinear. 
In particular, when equation \eqref{eq2} is considered in a connected analytic bounded domain $\Omega$  together with tangential boundary conditions $\bol{w}\cdot\bol{n}=0$ on $\p\Omega$,
where $\bol{n}$ is the unit outward normal to $\p\Omega$, any contour of $\Psi$ that does not intersect the boundary $\p\Omega$ and such that $\nabla\Psi\neq \bol{0}$ is a two-dimensional torus. This result is also the reason why 
toroidal volumes such that $\Psi$ is constant on the boundary are considered in the study of equation \eqref{eq2}.  
The fact that a simpler topology where level sets of $\Psi$ are spherical is not consistent with \eqref{eq2} can also be understood through the hairy ball theorem \cite{Eisen}, 
which precludes the existence of a continuous non-vanishing vector field always tangent to a $2$-sphere. 

Considering the challenge posed by equation \eqref{eq2} described above, here we examine the  simplified problem of equation \eqref{eq1}. 
Observe that while in \eqref{eq2} the
magnitude of the component of $\lr{\nabla\cp\bol{w}}\cp\bol{w}$ along $\nabla\Psi$ is exactly $\abs{\nabla\Psi}$, no such requirement appears in \eqref{eq1}. As it will be shown later, under suitable assumptions this simplifies the mathematical difficulty by a `half', since the governing equations  
can be reduced from two to one. 
Notice that studying equation \eqref{eq1} may provide useful information concerning the nature of the space of solutions of equation \eqref{eq2}. 
Indeed, any conditions preventing the existence of solutions of \eqref{eq1} would also apply to \eqref{eq2}.
Furthermore, if regular solutions of \eqref{eq1} could be obtained, it would be possible to identify the geometrical obstruction preventing such solutions from solving \eqref{eq2} as well. 

The strategy we adopt to examine equation \eqref{eq1} is to reduce the equation by a 
Clebsch representation \cite{YosClebsch,YosMor} of the vector field $\bol{w}$ through a pair of Clebsch potentials $\lr{\Psi,\Theta}$ that reflect the foliated ($\nabla\Psi\cdot\bol{w}=0$) and solenoidal ($\nabla\cdot\bol{w}=0$) nature  of the candidate solution $\bol{w}=\nabla\Psi\cp\nabla\Theta$.  
This approach has the advantage that 
the topology of the foliation  
associated with the (given) function $\Psi$ can be enforced a priori, leaving the analysis of the existence of solutions as an independent issue for the Clebsch potential $\Theta$. 
%Our aim in this paper is to show that, for any given $\Psi$,  solutions of \eqref{eq1} always exist 
%such that ... %$\bol{w}\in H^1\lr{\Omega}$, 
%where $H^1\lr{\Omega}=\left\{\bol{w}\in L^2\lr{\Omega}:\nabla w_i\in L^2\lr{\Omega},i=1,2,3\right\}$ is the standard Sobolev space.
In particular, we prove the following: 

%\begin{itemize}
\begin{theorem}
Let $\Omega\subset\mathbb{R}^3$ denote a bounded domain.  
Assume that the bounding surface $\p\Omega$ is a hollow torus corresponding to 
two distinct level sets of a smooth function $\Psi\in C^{\infty}
({\Omega})$, with $\nabla\Psi\neq\bol{0}$ in $\Omega$, 
%\lr{\Omega'}$ with $\Omega\subset\Omega'\subseteq\mathbb{R}^3$,
and that 
level sets of $\Psi$ foliate $\Omega$ with nested toroidal surfaces endowed with angle coordinates $\mu,\nu$ with smooth gradients  $\nabla\mu,\nabla\nu\in C^{\infty}\lr{\Omega}$. 
%C^{\omega}\lr{\Omega'}$.  
Then, the system of partial differential equations
\begin{equation}
\left[\lr{\nabla\cp\bol{w}}\cp\bol{w}\right]\cp\nabla\Psi=\bol{0},
%\bol{w}\cdot\nabla\Psi=0,~~~~\nabla\cp\bol{w}\cdot\nabla\Psi=0,
~~~~\nabla\cdot\bol{w}=0~~~~{\rm in}~~\Omega,\label{eq0}
\end{equation}
admits a nontrivial solution
$\bol{w}\in C^{\infty}\lr{\Omega}$ such that  $\bol{w}$ and $\nabla\cp\bol{w}$ are not everywhere collinear.  
\end{theorem}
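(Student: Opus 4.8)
The plan is to realize $\bol{w}$ through a Clebsch representation adapted to the foliation, namely $\bol{w}=\nabla\Psi\times\nabla\Theta$ for a scalar field $\Theta$ to be determined. This ansatz makes the two structural constraints automatic: $\nabla\cdot\bol{w}=0$ because $\nabla\Psi\times\nabla\Theta=\nabla\times(\Psi\nabla\Theta)$, and $\bol{w}\cdot\nabla\Psi=0$ so that $\bol{w}$ is tangent to the level sets of $\Psi$. I would first show that, under this ansatz, the full system collapses to a single scalar equation. Writing $\bol{J}=\nabla\times\bol{w}$, the identity $(\nabla\times\bol{w})\cdot\nabla\Psi=\nabla\cdot(\bol{w}\times\nabla\Psi)$ shows that $\bol{J}$ is tangent to the surfaces precisely when $\nabla\cdot(\bol{w}\times\nabla\Psi)=0$; and if both $\bol{w}$ and $\bol{J}$ are tangent, their cross product is normal, i.e. $\bol{J}\times\bol{w}$ is parallel to $\nabla\Psi$, which is exactly $[(\nabla\times\bol{w})\times\bol{w}]\times\nabla\Psi=\bol{0}$. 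Hence it suffices to solve the single scalar equation $\nabla\cdot(\bol{w}\times\nabla\Psi)=0$.

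Next I would make the elliptic structure explicit. Expanding, $\bol{w}\times\nabla\Psi=\abs{\nabla\Psi}^2\nabla\Theta-(\nabla\Psi\cdot\nabla\Theta)\nabla\Psi=\abs{\nabla\Psi}^2\,\Pi\nabla\Theta$, where $\Pi=I-\hat{n}\otimes\hat{n}$ is the projection onto the tangent plane of the surfaces and $\hat{n}=\nabla\Psi/\abs{\nabla\Psi}$. The scalar equation therefore reads $\nabla\cdot(\abs{\nabla\Psi}^2\,\Pi\nabla\Theta)=0$, a second-order linear equation in divergence form whose coefficient matrix $\abs{\nabla\Psi}^2\Pi$ is positive on the tangent plane and degenerate in the normal direction. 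In flux coordinates $(\Psi,\mu,\nu)$ the integrand $\bol{T}:=\abs{\nabla\Psi}^2\,\Pi\nabla\Theta$ is tangent to the surfaces, so its divergence involves only the $\mu,\nu$ derivatives of its contravariant components, which in turn depend only on $\partial_\mu\Theta,\partial_\nu\Theta$ and on the smooth, periodic metric coefficients. Thus for each fixed $\Psi$ the equation is a uniformly elliptic, second-order, divergence-form scalar PDE on the $2$-torus in the angle variables $\mu,\nu$, exactly the reduction promised in the abstract.

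The key difficulty is that on a closed surface this operator has only constants in its kernel, and a constant $\Theta$ yields $\bol{w}=\bol{0}$. I would resolve this by exploiting the multivaluedness of the angle coordinates: take $\Theta=c_\mu\mu+c_\nu\nu+\vartheta$ with constants $c_\mu,c_\nu$ (or smooth functions of $\Psi$) and $\vartheta$ single-valued, noting that $\bol{w}=\nabla\Psi\times\nabla\Theta$ remains single-valued and smooth because $\nabla\Psi\times\nabla\Psi=\bol{0}$ annihilates any $\Psi$-dependence of the secular coefficients. The equation becomes $\mathcal{L}_0\vartheta=\rho$, where $\mathcal{L}_0$ is the self-adjoint elliptic operator above and $\rho=-\mathcal{L}_0(c_\mu\mu+c_\nu\nu)$ is single-valued because it is a combination of $\mu,\nu$-derivatives of periodic quantities. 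By the Fredholm alternative on the torus, solvability requires $\rho$ to be orthogonal to the constants; but $\int\rho\,d\mu\,d\nu=0$ automatically, precisely because $\rho$ is a pure divergence of periodic fields. Hence for every choice of $c_\mu,c_\nu$ there is a solution $\vartheta$, unique up to an additive constant, which I would fix by a zero-mean normalization; elliptic regularity and smooth dependence of $\mathcal{L}_0^{-1}$ on the parameter $\Psi$ then give $\vartheta\in C^\infty$ and hence $\Theta,\bol{w}\in C^\infty(\Omega)$.

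Finally I would address the two qualitative requirements. Nontriviality follows because a nonzero choice of $(c_\mu,c_\nu)$ forces a nonvanishing net circulation of $\bol{w}$ on the surfaces, so $\bol{w}\not\equiv\bol{0}$. For non-collinearity, note that the constructed field satisfies $\bol{J}\times\bol{w}=g\,\nabla\Psi$ for a scalar $g$, and $\bol{w}$ and $\bol{J}$ are everywhere collinear iff $g\equiv0$, i.e. iff the field is force-free. I expect this to be the real obstacle: ruling out the exceptional force-free case. I would handle it by showing that $g$ cannot vanish identically for a suitable choice of the free data $(c_\mu,c_\nu)$ — equivalently, that the tangential current $\bol{J}$ is not pointwise proportional to $\bol{w}$ — which can be checked by a direct computation on the surfaces, and is in any case confirmed by the explicit non-symmetric examples constructed later.
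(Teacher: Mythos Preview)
Your overall strategy coincides with the paper's: the same Clebsch ansatz $\bol{w}=\nabla\Psi\times\nabla\Theta$, the same reduction to the scalar divergence-form equation $\nabla\cdot\bigl(\abs{\nabla\Psi}^2\,\Pi\nabla\Theta\bigr)=0$, the same secular-plus-periodic decomposition $\Theta=c_\mu\mu+c_\nu\nu+\vartheta$, and the same appeal to elliptic theory on the torus together with smooth dependence on $\Psi$ to patch the surface solutions into a $C^\infty$ field on $\Omega$. The nontriviality argument via nonzero mean of $\Theta_\mu$ is also the paper's.

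The genuine gap is your treatment of non-collinearity. You correctly identify the force-free case $g\equiv 0$ as the obstacle, but your proposed resolution --- ``for a suitable choice of $(c_\mu,c_\nu)$'' one can avoid it, to be ``checked by a direct computation'' or inferred from later explicit examples --- is not an argument. There is no a priori reason why varying the secular coefficients should break a Beltrami or vacuum structure for a general foliation $\Psi$, and appealing to the examples of a later section does not prove the theorem as stated for \emph{every} admissible $\Psi$.

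The paper closes this gap with a one-line trick you missed: if the constructed $\bol{w}$ happens to be curl-free or Beltrami, replace it by $\bol{w}'=f(\Psi)\,\bol{w}$ for any smooth $f$ with $f'\neq 0$. Since $\bol{w}\cdot\nabla\Psi=0$, one still has $\nabla\cdot\bol{w}'=0$, and a direct computation gives
\[
(\nabla\times\bol{w}')\times\bol{w}'=-\tfrac{1}{2}\,\frac{\p f^2}{\p\Psi}\,\abs{\bol{w}}^2\,\nabla\Psi,
\]
which is nonzero wherever $\bol{w}\neq\bol{0}$. This guarantees non-collinearity without any case analysis on $(c_\mu,c_\nu)$.
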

%\end{itemize}

The proof of theorem 1 follows by observing that,
upon introducing the Clebsch representation
$\bol{w}=\nabla\Psi\cp\nabla\Theta$ of the solution, equation \eqref{eq0} reduces to a single linear 
elliptic second-order partial differential equation on each toroidal surface $\Psi={\rm constant}$ for the unknown $\Theta$ in a periodic domain.
Regular periodic solutions of these equations can be obtained 
by elliptic theory. 
%with the aid of Fourier series. 
A global solution $\Theta$ can then be constructed by smoothly joining solutions corresponding to different toroidal surfaces, thus providing a smooth solution $\bol{w}$ of \eqref{eq0} in a hollow toroidal volume $\Omega$.  

The present paper is organized as follows. 
In section 2, basic aspects of equation \eqref{eq1} are discussed, the reduction of the equation through Clebsch potentials to a three-dimensional linear second-order degenerate elliptic partial differential equation is presented, and a corresponding variational formulation is derived.
In section 3, the reduced equation is reformulated as a family of two-dimensional linear elliptic second-order partial differential equations, each correponding to a given toroidal surface. 
Theorem 1 is proven in section 4, while an  example of numerical solution is obtained in section 5.
Smooth solutions of equation \eqref{eq1} with nested toroidal surfaces and without continuous Euclidean isometries are then constructed explicitly in section 6. It is further shown that such solutions can be ragarded as a equilibrium magnetic fields within the framework of anisotropic magnetohydrodynamics. 
Section 7 presents some considerations on the
application of the theory to the study of equation \eqref{eq2}. Concluding remarks are given in section 8.

%remark that the solution could be a Beltrami field or a vacuum field. Vacuum field should be special cases (stringent conditions on metric coefficients). Furthermore, they could be used to construct non-vacuum sols (just take $\Psi\nabla\Theta$ of $\Psi\bol{w}_{Bel}$. %Review statement of main thm

\section{General properties of the equation}
The aim of this section is to discuss some basic properties of equation \eqref{eq1},
as well as a variational formulation of the problem in terms of Clebsch potentials.  
For the purpose of this section, we shall assume that all involved quantities can be differentiated as many times as necessary. 

First, it should be noted that the main difficulty in system \eqref{eq1} is not the
requirement $\left[\lr{\nabla\cp\bol{w}}\times\bol{w}\right]\cp\nabla\Psi=\bol{0}$ that both $\bol{w}$ and $\nabla\cp\bol{w}$ lie on the same 
surface $\Psi={\rm constant}$ per se, but rather its combination with 
the solenoidal condition $\nabla\cdot\bol{w}=0$. 
Indeed, if one drops the equation $\nabla\cdot\bol{w}=0$, any 
vector field of the form
\begin{equation}
\bol{w}=f\lr{\Psi,\alpha}\nabla\alpha +g\lr{\Psi,\beta}\nabla\beta,
\end{equation}
where $\alpha$ and $\beta$ are functions with the property that $\nabla\Psi\cdot\nabla\alpha=\nabla\Psi\cdot\nabla\beta=0$, 
is a nontrivial solution of $\left[\lr{\nabla\cp\bol{w}}\times\bol{w}\right]\cp\nabla\Psi=\bol{0}$ where the proportionality coefficient $\lambda$ 
between $\lr{\nabla\cp\bol{w}}\cp\bol{w}$ and $\nabla\Psi$ is given by 
\begin{equation}
\lambda=-\frac{1}{2}\frac{\p f^2}{\p\Psi}\abs{\nabla\alpha}^2
-\frac{1}{2}\frac{\p g^2}{\p\Psi}\abs{\nabla\beta}^2
-\frac{\p \lr{fg}}{\p\Psi}\nabla\alpha\cdot\nabla\beta.
\end{equation}
Such configurations can be constructed explicitly. 
For example, introduce cylindrical coordinates $\lr{r,\varphi,z}$ and consider a family of axially symmetric toroidal surfaces with circular cross section and major radius $r_0>0$ corresponding to level sets of the function
\begin{equation}
\Psi_0=\frac{1}{2}\left[\lr{r-r_0}^2+z^2\right].\label{Psi0}
\end{equation}
The axial symmetry of $\Psi_0$ can be broken by introducing a small displacement, 
\begin{equation}
\Psi_{\epsilon}=\Psi_0+\frac{1}{2}\epsilon\sin\lr{m\varphi},~~~~m\in\mathbb{Z},~~~~m\neq0,\label{PsiEx}
\end{equation}
where $\epsilon>0$ is a small real constant. 
In particular, observe that for a sufficiently small $\epsilon$ 
level sets of \eqref{PsiEx} generate toroidal surfaces. 
Furthermore, due to the dependence on the toroidal angle $\varphi$, these surfaces are not invariant under continuous Euclidean isometries, i.e.  some appropriate combination of translations and rotations. 
Indeed, recalling that $\bol{a}+\bol{b}\times\bol{x}$ with $\bol{a},\bol{b}\in\mathbb{R}^3$ is the generator of continuous Euclidean isometries, one sees that the only solution of the equation
\begin{equation}
\mf{L}_{{\bol{a}+\bol{b}\cp\bol{x}}}\Psi_{\epsilon}=\lr{\bol{a}+\bol{b}\cp\bol{x}}\cdot\nabla\Psi_{\epsilon}=0,
\end{equation}
is $\bol{a}=\bol{b}=\bol{0}$.
Let us verify this fact explicitly. We have
\begin{equation}
\begin{split}
\mf{L}_{{\bol{a}+\bol{b}\cp\bol{x}}}\Psi_{\epsilon}=&
\lr{a_x+b_yz-b_zy}\left[
\lr{1-\frac{r_0}{r}}x-\frac{1}{2}\epsilon m\cos\lr{m\varphi}\frac{y}{r^2}\right]
\\&+\lr{a_y+b_zx-b_x z}\left[\lr{1-\frac{r_0}{r}}y+\frac{1}{2}\epsilon m \cos\lr{m\varphi}\frac{x}{r^2}\right]+z\lr{a_z+b_xy-b_yx},\label{Lie11}
\end{split}
\end{equation}
where $a_x$, $a_y$, $a_z$, $b_x$, $b_y$, and $b_z$ are the Cartesian components of $\bol{a}$ and $\bol{b}$. 
Next, evaluating the expression above along the positive $x$-axis where $\varphi=y=z=0$ and $x=r$ gives
\begin{equation}
%\begin{split}
\mf{L}_{{\bol{a}+\bol{b}\cp\bol{x}}}\Psi_{\epsilon}=
%a_x\lr{r-r_0}+\lr{a_y+b_zr}\frac{\epsilon m}{2r}=
\frac{\epsilon m}{2}b_z-a_xr_0+a_x x+a_y\frac{\epsilon m}{2x}. 
%\end{split}
\end{equation}
Since $x$ is not constant, this quantity vanishes only if $a_x=a_y=b_z=0$. 
Now, the surviving terms in equation \eqref{Lie11} 
are those involving $a_z$, $b_x$, and $b_y$. 
On the toroidal section $\varphi=0$, equation \eqref{Lie11} 
therefore becomes
\begin{equation}
\mf{L}_{{\bol{a}+\bol{b}\cp\bol{x}}}\Psi_{\epsilon}=-b_yr_0z
-b_xz\frac{\epsilon m}{2r}+za_z.
\end{equation}
It follows that the expression above vanishes if $b_x=0$ 
and $a_z=b_yr_0$. 
Finally, consider the toroidal section $\varphi=\pi/2$. Here, 
\begin{equation}
\mf{L}_{{\bol{a}+\bol{b}\cp\bol{x}}}\Psi_{\epsilon}=-b_yz\frac{\epsilon m}{2r}\cos\lr{\frac{m\pi}{2}}+z b_yr_0.
\end{equation}
This quantity vanishes for arbitrary $r$ and $z$ only if $b_y=a_z/r_0=0$. 
Hence, $\bol{a}=\bol{b}=\bol{0}$, which implies that level sets of \eqref{PsiEx} are not invariant under continuous Euclidean isometries. 

Now define the toroidal domain $\Omega$ as the volume enclosed by a contour of the function $\Psi_{\epsilon}$ in equation \eqref{PsiEx}, and consider the vector field 
\begin{equation}
\bol{w}=f\lr{\Psi_{\epsilon}}\nabla\alpha,\label{wEx}
\end{equation}
where $f$ is some function of $\Psi_{\epsilon}$ and $\alpha=\arctan\left[z/\lr{r-r_0}\right]$. 
Since $\nabla\Psi_{\epsilon}\cdot\nabla\alpha=0$, it readily follows that
\begin{equation}
\lr{\nabla\cp\bol{w}}\cp\bol{w}=-\frac{1}{2}\frac{\p f^2}{\p\Psi_{\epsilon}}\abs{\nabla\alpha}^2\nabla\Psi_{\epsilon}=-\frac{1}{2\left[2\Psi_{\epsilon}-\epsilon\sin\lr{m\varphi}\right]}\frac{\p f^2}{\p\Psi_{\epsilon}}\nabla \Psi_{\epsilon}, 
\end{equation}
although 
\begin{equation}
\nabla\cdot\bol{w}=f\lr{\Psi_{\epsilon}}\Delta\alpha=-\frac{zf\lr{\Psi_{\epsilon}}}{r\left[2\Psi_{\epsilon}-\epsilon \sin\lr{m\varphi}\right]},
\end{equation}
which does not vanish in general. 
A plot of the vector field \eqref{wEx} and its modulus on a level set of $\Psi_{\epsilon}$ is given in figure \ref{fig1}. 
\begin{figure}[h]
\hspace*{-0cm}\centering
\includegraphics[scale=0.5]{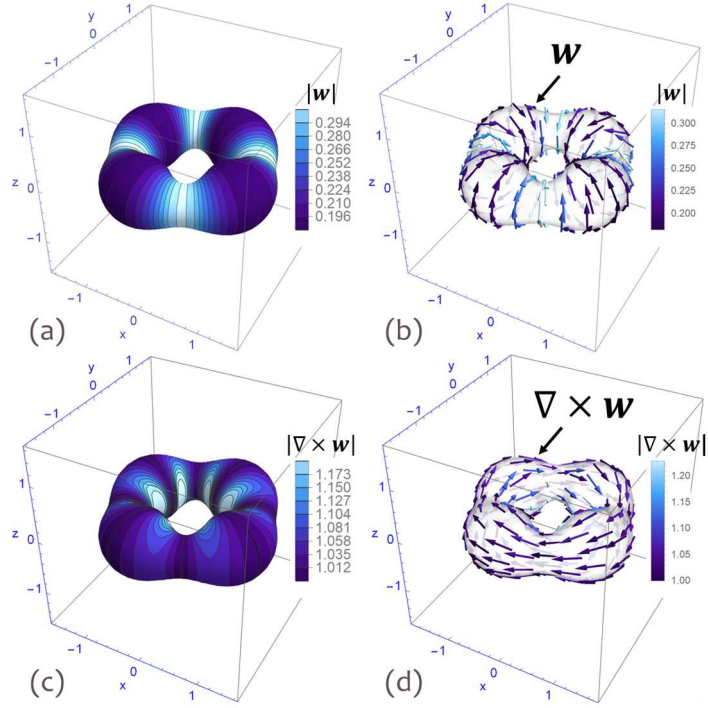}%0.220
\caption{\footnotesize (a) Contour plot of $\abs{\bol{w}}$ over the level set $\Psi_{\epsilon}=0.1$. 
(b) Vector plot of $\bol{w}$ over the level set $\Psi_{\epsilon}=0.1$. 
(c) Contour plot of $\abs{\nabla\cp\bol{w}}$ over the level set $\Psi_{\epsilon}=0.1$. 
(d) Vector plot of $\nabla\cp\bol{w}$ over the level set $\Psi_{\epsilon}=0.1$.
In (a), (b), (c), and (d) 
$\bol{w}$ is defined by equation \eqref{wEx}, $f=\Psi_{\epsilon}$, and 
the values $r_0=1$, $\epsilon=0.1$, and $m=4$ are used.}
\label{fig1}
\end{figure}
This example shows that if the space of solutions is not restricted to solenoidal vector fields, 
smooth vector fields obeying $\left[\lr{\nabla\cp\bol{w}}\cp\bol{w}\right]\cp{\nabla\Psi}=\bol{0}$ in some bounded region can be obtained 
in a rather straightforward fashion. 

We now return to the original problem \eqref{eq1}.
%For example, setting $\Psi=\frac{1}{2}\left[\lr{r-1}^2+z^2\right]+0.02\sin\lr{ 9\varphi}$, $\alpha=\arctan\left[z/\lr{r-r_0}\right]$, $f=\Psi$, $g=0$,  
%one can verify that the surface $\Psi=0.08$ is a torus, and that the vector field
%$\bol{w}=\Psi\nabla\alpha$ satisfies $\lr{\nabla\cp\bol{w}}\times\bol{w}=-\left\{4\left[\Psi-0.02\sin\lr{9\varphi}\right]\right\}^{-1}\nabla\Psi^2$. 
%However, $\nabla\cdot\bol{w}=\Psi\Delta\alpha\neq0$. 
Observe that whenever 
$\lr{\nabla\cp\bol{w}}\cp\bol{w}\neq\bol{0}$ any solution of \eqref{eq1} must satisfy $\bol{w}\cdot\nabla\Psi=0$, or
\begin{equation}
\bol{w}=\nabla\Psi\cp\bol{q},\label{wq}
\end{equation}
for some vector field $\bol{q}\lr{\bol{x}}$. 
Since $\nabla\cdot\bol{w}=-\nabla\Psi\cdot\nabla\cp\bol{q}$, 
a straightforward way to ensure that
$\nabla\cdot\bol{w}=0$ is to demand that $\nabla\cp\bol{q}=\bol{0}$. 
For a given 
solenoidal vector field $\bol{w}$ 
in
a small enough neighborhood 
the vector field $\bol{q}$ can always be found 
in the form $\bol{q}=\nabla \theta$, with $\theta$ a single-valued function, provided that $\bol{w}$ 
is sufficiently regular (Darboux theorem \cite{Dhb1,Dhb2}). 
This implies that locally solutions of \eqref{eq1} have the form $\bol{w}=\nabla\Psi\cp\nabla\theta$. 
We shall therefore consider
candidate global solutions of the type 
\begin{equation}
\bol{w}=\nabla\Psi\cp\nabla\Theta,\label{wCl}
\end{equation}
where $\Theta$ is allowed to be a multivalued (angle) variable. 
More precisely, we write $\nabla\Theta$ to denote an element of the kernel of the curl operator, $\nabla\Theta\in {\rm Ker\lr{curl}}$. 
In the following, we shall refer to scalar quantities such as $\Psi$ and $\Theta$ used to express a vector field as Clebsch potentials, and to the form \eqref{wCl} as a Clebsch representation of the solution $\bol{w}$ (see \cite{YosClebsch} for additional details on Clebsh representations and their completeness).
It is now clear that finding a solution of \eqref{eq1} in the Clebsch form \eqref{wCl} is tantamount to
determining two vector fields $\nabla\Theta$ and $\bol{p}$ such that
\begin{equation}
\nabla\cp\lr{\nabla\Psi\cp\nabla\Theta}=\nabla\Psi\cp\bol{p}~~~~{\rm in}~~\Omega.\label{wp}
\end{equation}
Equation \eqref{wp} is equivalent to demanding that $\nabla\cp\bol{w}$ does not have any component in the direction of $\nabla\Psi$, i.e.
\begin{equation}
\nabla\Psi\cdot\nabla\cp\lr{\nabla\Psi\cp\nabla\Theta}=0~~~~{\rm in}~~\Omega.
\end{equation}
By standard vector identities, we thus arrive at 
\begin{equation}
\nabla\cdot\left[\nabla\Psi\cp\lr{\nabla\Theta\cp\nabla\Psi}\right]=0~~~~{\rm in}~~\Omega.\label{eq1X}
\end{equation}
Hence, solutions of \eqref{eq1} 
with Clebsch representation 
\eqref{wCl} are solutions of \eqref{eq1X}. 
In the reminder of this paper 
we shall therefore concentrate our efforts on the study of equation \eqref{eq1X} under appropriate boundary conditions for the variable $\Theta$. 

For a given $\Psi$, equation \eqref{eq1X} is a second order degenerate elliptic
partial differential equation for the unknown $\Theta$. 
The equation is also linear, in contrast with the nonlinearity of system \eqref{eq1}. 
To see this, observe that \eqref{eq1X} can be written as
\begin{equation}
\begin{split}
&\abs{\nabla\Psi}^2\Delta\Theta-\nabla\Psi\cdot\lr{\nabla\Psi\cdot\nabla}\nabla\Theta-
\nabla\Theta\cdot\lr{\nabla\Psi\cdot\nabla}\nabla\Psi+\nabla\abs{\nabla\Psi}^2\cdot\nabla\Theta-\lr{\nabla\Psi\cdot\nabla\Theta}\Delta\Psi\\&=\sum_{i,j=1}^3
\abs{\nabla\Psi}^2\lr{\delta_{ij}-\frac{\Psi_i\Psi_j}{\abs{\nabla\Psi}^2}}\Theta_{ij}+\sum_{i=1}^3\lr{\frac{1}{2}\abs{\nabla\Psi}^2_i%-\Psi_j\Psi_{ij}
-\Psi_i\Delta\Psi}\Theta_i=0~~~~{\rm in}~~\Omega,
\end{split}
\end{equation}
where lower indices have been used as a shorthand notation for partial derivatives with respect to Cartesian coordinates $\lr{x,y,z}=\lr{x^1,x^2,x^3}$,  
e.g. $\Theta_1=\p\Theta/\p x^1=\p\Theta/\p x$. 
The coefficient matrix
\begin{equation}
\mf{a}_{ij}=\abs{\nabla\Psi}^2\lr{\delta_{ij}-\frac{\Psi_i\Psi_j}{\abs{\nabla\Psi}^2}},~~~~i,j=1,2,3,
\end{equation}
is symmetric and positive semi-definite since
\begin{equation}
\mf{a}_{ij}\xi^i\xi^j=\abs{\nabla\Psi\cp\bol{\xi}}^2\geq 0,~~~~\bol{\xi}\in\mathbb{R}^3,~~\bol{x}\in\Omega,
\end{equation}
and thus defines a degenerate elliptic 
differential operator. 
The degeneracy of the solution is evident from the fact that if $\Theta$ 
is a solution of \eqref{eq1X}, so is $\Theta+f\lr{\Psi}$, with $f$ a function of $\Psi$.
In particular, it should be emphasized that the degeneracy is not expected to prevent the existence of solutions, but simply to affect their uniqueness.

Equation \eqref{eq1X} also admits a variational formulation. 
Indeed, defining the magnetic energy (kinetic energy in the fluid analogy)
\begin{equation}
E_{\Omega}=\frac{1}{2}\int_{\Omega}\bol{w}^2\,dV=\frac{1}{2}\int_{\Omega}\abs{\nabla\Psi\cp\nabla\Theta}^2\,dV,\label{E}
\end{equation}
where $dV$ is the volume element in $\mathbb{R}^3$, 
and assuming that variations $\delta\Theta$ 
vanish on the bounding surface $\p\Omega$, one obtains
\begin{equation}
\delta E_{\Omega}=\int_{\Omega}\nabla\delta\Theta\cdot\lr{\nabla\Psi\cp\nabla\Theta}\cp\nabla\Psi\,dV=-\int_{\Omega}\delta\Theta\nabla\cdot\left[\nabla\Psi\cp\lr{\nabla\Theta\cp\nabla\Psi}\right]\,dV.
\end{equation}
%In this notation, $dV$ is the volume element in $\mathbb{R}^3$. 
Hence, stationary points of the energy $E_{\Omega}$ correspond to 
solutions of \eqref{eq1X}. 

\section{Reformulation as an elliptic equation on a toroidal surface}
As outlined in the introduction, we aim to
remove the degeneracy of equation \eqref{eq1X} by 
reducing it to a linear two-dimensional second-order elliptic partial differential equation over each toroidal surface $\Psi={\rm constant}$. 
The degeneracy 
%is 
can be effectively removed, for example, by 
fixing the mean value $\langle\Theta\rangle$ of 
the unknown $\Theta$ over the surface. 
A unique solution of \eqref{eq1X} %is 
can then obtained by patching 
two-dimensional solutions corresponding to different toroidal surfaces.
In order to implement this construction, we introduce curvilinear coordinates $\lr{x^1,x^2,x^3}=\lr{\mu,\nu,\Psi}$ with $\mu,\nu\in \left[0,2\pi\right)$ angle coordinates spanning the toroidal surfaces $\Psi={\rm  constant}$, $\p_i$, $i=1,2,3$, the corresponding tangent vectors,   $J=\nabla\mu\cdot\nabla\nu\cp\nabla\Psi$ the Jacobian determinant of the transformation, and $g_{ij}=\p_i\cdot\p_j$ the covariant metric tensor.  
Using these quantities,
equation \eqref{eq1X}
restricted to the surface $\Sigma_{\Psi_0}=\left\{\bol{x}\in\Omega:\Psi\lr{\bol{x}}=\Psi_0\in\mathbb{R}\right\}$
takes the form
\begin{equation}
\frac{\p}{\p\mu}\left[
J\lr{g_{\nu\nu}\frac{\p\Theta}{\p\mu}-g_{\mu\nu}\frac{\p\Theta}{\p\nu}}\right]
+\frac{\p}{\p\nu}
\left[
J\lr{g_{\mu\mu}\frac{\p\Theta}{\p\nu}-g_{\mu\nu}\frac{\p\Theta}{\p\mu}}
\right]=0~~~~{\rm in}~~\Sigma_{\Psi_0}.\label{eq1Y}
\end{equation}
Notice that in this notation $g_{11}=g_{\mu\mu}$, $g_{12}=g_{\mu\nu}$, and $g_{22}=g_{\nu\nu}$. 
Let us verify that the two-dimensional second order partial differential equation \eqref{eq1Y} is 
elliptic. 
First, observe that equation \eqref{eq1Y}
can be written as
\begin{equation}
\begin{split}
&g_{\nu\nu}\Theta_{\mu\mu}
-2g_{\mu\nu}\Theta_{\mu\nu}
+g_{\mu\mu}\Theta_{\nu\nu}
+\left[
\frac{J_{\mu}}{J}g_{\nu\nu}+\frac{\p g_{\nu\nu}}{\p\mu}-\frac{J_{\nu}}{J}g_{\mu\nu}-\frac{\p g_{\mu\nu}}{\p\nu}
\right]
\Theta_{\mu}
\\&+\left[\frac{J_\nu}{J}g_{\mu\mu}+\frac{\p g_{\mu\mu}}{\p\nu}-\frac{J_{\mu}}{J}g_{\mu\nu}-\frac{\p g_{\mu\nu}}{\p\mu}
\right]\Theta_\nu=0
~~~~{\rm in}~~\Sigma_{\Psi_0}.\label{eq1Y2}
\end{split}
\end{equation}
Equation \eqref{eq1Y2} has the form
\begin{equation}
\sum_{i,j=1}^2a_{ij}\Theta_{ij}+{\rm lower~order~ terms}=0,
\end{equation}
where the coefficient matrix $A$ with components $a_{ij}$, $i,j=1,2$, is given by 
\begin{equation}
A=\begin{bmatrix}
g_{\nu\nu}&-g_{\mu\nu}\\
-g_{\mu\nu}&g_{\mu\mu}
\end{bmatrix}\label{A}
\end{equation}
Evidently, $A=A^T$. Furthermore, the eigenvalues of $A$ are given by
\begin{equation}
\lambda_{\pm}=\frac{{\rm Tr}A\pm\sqrt{\lr{{\rm Tr}A}^2-4{\rm det}A}}{2},
\end{equation}
with 
\begin{equation}
{\rm Tr}A=g_{\mu\mu}+g_{\nu\nu}>0,~~~~{\rm det}A=g_{\nu\nu}g_{\mu\mu}-g^2_{\mu\nu}=\abs{\p_{\nu}\cp\p_{\mu}}^2>0.
\end{equation}
Both eigenvalues are real and positive with $\lambda_{+}\geq \lambda_{-}>0$ because
\begin{equation}
\lr{{\rm TrA}}^2>\lr{{\rm TrA}}^2-4{\rm det}A=\lr{g_{\mu\mu}-g_{\nu\nu}}^2+4g_{\mu\nu}^2\geq 0.
\end{equation}
It therefore follows that 
equation \eqref{eq1Y} is strictly elliptic. Indeed,
any vector $\bol{\xi}\in\mathbb{R}^2$ can be decomposed on the basis of normalized eigenvectors  $\lr{\bol{e}_{+},\bol{e}_{-}}$ so that 
\begin{equation}
a_{ij}\xi^i\xi^j\geq \lambda_{-}\abs{\bol{\xi}}^2\geq0,~~~~\bol{\xi}\in\mathbb{R}^2,~~~~\lr{\mu,\nu}\in\left[0,2\pi\right),~~~~\Psi=\Psi_0.
\end{equation}

%Eigenvalues are positive implies strict ellipticity (just compute $a^{ij}\xi_i\xi_j$ with $\bol{\xi}=\lambda_1\bol{e}_1+\lambda_2\bol{e}_2$)
%where $\bol{e}_i$ is a normalized eigenvector with eigenvalue $\lambda_i>0$. One has $a^{ij}\xi_i\xi_j\geq {\rm min}\left\{\lambda_1,\lambda_2\right\}\abs{\bol{\xi}}^2>0$ (recall that eigenvectors are orhotognal due to symmetry $a=a^T$).

Thanks to the strictly elliptic nature of the differential operator, once appropriate boundary conditions are enforced, 
solutions $\Theta$
%when boundary conditions are enforced, 
%a solution $\Theta$ 
of equation \eqref{eq1Y} 
exist and are unique 
(see e.g. \cite{Gil}). 
Furthermore, as it will be shown later, 
solutions corresponding 
to different toroidal surfaces can be patched together to obtain a solution in the three-diemensional volume $\Omega$. 
However, not all boundary conditions can be used to produce nontrivial vector fields $\bol{w}$ in $\Omega$. For example, 
defining the doubly periodic domain $D=\lr{0,2\pi}^2$, 
Dirichlet boundary conditions $\Theta=0$ on 
$\p D$ result in the trivial solution $\Theta=0$, and thus $\bol{w}=\bol{0}$. 
Furthermore, even if 
a set of boundary conditions
results in a nontrivial $\Theta$, 
there is no guarantee that 
the corresponding vector field 
$\nabla\Theta$ is a periodic function of the variables $\mu$ and $\nu$,  a condition that is necessary for the continuity of the solution $\bol{w}=\nabla\Psi\cp\nabla\Theta$ in $\Omega$.   
In principle, these difficulties can be avoided as follows. First, performing the change of variables
\begin{equation}
\Theta=\mu+\rho,\label{cov}
\end{equation}
in equation \eqref{eq1Y} gives 
\begin{equation}
\frac{\p}{\p\mu}\left[
J\lr{g_{\nu\nu}\frac{\p\rho}{\p\mu}-g_{\mu\nu}\frac{\p\rho}{\p\nu}}\right]
+\frac{\p}{\p\nu}
\left[
J\lr{g_{\mu\mu}\frac{\p\rho}{\p\nu}-g_{\mu\nu}\frac{\p\rho}{\p\mu}}
\right]=\frac{\p}{\p\nu}\lr{Jg_{\mu\nu}}-\frac{\p}{\p\mu}\lr{Jg_{\nu\nu}}~~~~{\rm in}~~D.\label{redeq0}
\end{equation}
Note that equation \eqref{redeq0} is strictly elliptic because it shares the same coefficient matrix $A$ with equation \eqref{eq1Y}. 
Furthermore, $\Sigma_{\Psi_0}$ has been replaced with $D$ to emphasize that
the problem is being considered within the domain of the angles $\mu$ and $\nu$. 
%Then, we look for 
Next, consider a periodic solution $\rho=\sum_{m,n}c_{mn}\lr{\Psi_0}e^{{\rm i}\lr{m\mu+n\nu}}$ such that
the integral
\begin{equation}
\langle\rho\rangle=\int_{D}d\mu d\nu\rho=0, 
\end{equation}
vanishes, i.e. such that $c_{00}=0$ (this latter condition ensures that the solution $\rho$ is unique on the  toroidal surface $\Psi_0$).
If such solution $\rho$
could be found, the corresponding $\Theta$ 
would be nontrivial since
 its gradient $\nabla\Theta=\nabla\mu+\nabla\rho$ would be 
 a periodic function of both $\mu$ and $\nu$
 such that $\langle\Theta_{\mu}\rangle=\langle 1+\rho_{\mu}\rangle=4\pi^2$. Of course, other changes of variables, such as $\Theta=M\lr{\Psi}\mu+N\lr{\Psi}\nu+\rho$, could be used as well. In fact, by appropriate choice of the functions $M\lr{\Psi}$ and $N\lr{\Psi}$ one can control the 
 rotational transform (the number of poloidal transits per toroidal transit of a field line on each level set of $\Psi$) of the solution $\bol{w}$.

It is now clear that the original problem \eqref{eq1} has been reduced to the existence of a periodic solution (with periodic derivatives) of equation \eqref{redeq0} that depends in a regular fashion on the surface label $\Psi$.
%In the following, 
%we shall therefore restrict our attention to equation \eqref{redeq0}. 
Although the coefficients 
appearing in 
equation \eqref{redeq0} are periodic functions of $\mu$ and $\nu$, enforcing a boundary condition such as $\rho\lr{0,\nu,\Psi_0}=\rho\lr{2\pi,\nu,\Psi_0}=\rho\lr{\mu,0,\Psi_0}=\rho\lr{\mu,2\pi,\Psi_0}=0$ is not enough to ensure 
the periodicity of the partial derivatives $\rho_{\mu}$,   $\rho_{\nu}$, $\rho_{\Psi}$, and so on. In other words, 
the corresponding solution will not generally correspond to a converging Fourier series. % in $\bar{D}$.   
Therefore, the regularity of the solution $\bol{w}=\nabla\Psi\cp\nabla\Theta$ will reflect the degree of periodicity of the solution $\rho$ and its partial derivatives.
In particular, denoting with $\lr{\p_{\mu},\p_{\nu},\p_{\Psi}}$ the tangent basis, observe that 
\begin{subequations}
\begin{align}
\bol{w}=&J\lr{\frac{\p\Theta}{\p\mu}\p_{\nu}-\frac{\p\Theta}{\p\nu}\p_{\mu}},\\
\nabla\cp\bol{w}=&
J\left\{\frac{\p}{\p\Psi}\left[
J\lr{g_{\mu\nu}\frac{\p\Theta}{\p\mu}-g_{\mu\mu}\frac{\p\Theta}{\p\nu}}
\right]
-\frac{\p}{\p\mu}\left[
J\lr{g_{\nu\Psi}\frac{\p\Theta}{\p\mu}-g_{\Psi\mu}\frac{\p\Theta}{\p\nu}}
\right]
\right\}\p_{\nu}
\notag\\&+J\left\{
\frac{\p}{\p\Psi}\left[
J\lr{g_{\mu\nu}\frac{\p\Theta}{\p\nu}-g_{\nu\nu}\frac{\p\Theta}{\p\mu}}
\right]
+\frac{\p}{\p\nu}\left[
J\lr{g_{\nu\Psi}\frac{\p\Theta}{\p\mu}-g_{\Psi\mu}\frac{\p\Theta}{\p\nu}}
\right]
\right\}\p_{\mu}. 
%J\frac{\p}{\p\Psi}\left[
%J\lr{g_{\mu\mu}\frac{\p\Theta}{\p\nu}-g_{\mu\nu}\frac{\p\Theta}{\p\mu}}
%\right]\p_{\nu}
%+J\frac{\p}{\p\Psi}\left[
%J\lr{g_{\nu\nu}\frac{\p\Theta}{\p\mu}-g_{\mu\nu}\frac{\p\Theta}{\p\nu}}
%\right]\p_{\mu}.
\end{align}\label{wcurlw}
\end{subequations}
Hence, for $\bol{w}$ and $\nabla\cp\bol{w}$ to be continuous in $\Omega$ it is necessary that the partial derivatives $\Theta_{\mu}=1+\rho_{\mu}$, $\Theta_{\nu}=\rho_{\nu}$, $\Theta_{\mu\mu}=\rho_{\mu\mu}$, $\Theta_{\mu\nu}=\rho_{\mu\nu}$, and $\Theta_{\nu\nu}=\rho_{\nu\nu}$
are periodic functions of $\mu$ and $\nu$. 
Conversely, if %$\Theta_{\mu}$ and $\Theta_{\nu}$ 
they fail to be periodic, $\bol{w}$ and $\nabla\cp\bol{w}$ will exhibit discontinuities on each toroidal surface in correspondence of the curves $\gamma_{\p D}=\left\{\bol{x}\in\Omega:\lr{\mu,\nu}\in\p D,\Psi=\Psi_0\right\}$. 

\section{Proof of the main theorem}

The purpose of this section is to prove theorem 1. 
As noted at the end of the previous section, 
in order to obtain a regular solution $\bol{w}$ of \eqref{eq1} in the domain $\Omega$, any solution $\rho$ of
\eqref{redeq0} must have periodic derivatives in the angles $\mu$ and $\nu$. 
This implies that the standard theory for elliptic partial differential equations
cannot be applied in a straightforward fashion because Dirichlet boundary conditions for $\rho$ do not guarantee the periodicity of its partial derivatives.  
Since there are no requirements on the boundary values that the 
function $\Theta=\mu+\rho$ should take on $\p D$, 
the idea is to 
construct   
%$\rho_0\in H^1_0\lr{D}$ for the Dirichlet boundary value problem 
a weak periodic solution
of equation \eqref{redeq0} in a two-dimensional lattice extending over $\mathbb{R}^2$ with unit cell $D$ by introducing an appropriate Hilbert space $H^1_{\rm per}\lr{D}\subset H^1\lr{D}$ containing  periodic functions. 
Then, interior regularity 
can be used to infer smoothness of weak solutions,
and thus periodicity of their derivatives. 

To carry out the program above, we begin by proving the following lemma:
%Is the solution with zero mean really unique? yes
\begin{lemma}
Let $V=D\times U$ denote a doubly periodic three-dimensional domain   
spanned by coordinates $\mu,\nu\in \left[0,2\pi\right)$, $\Psi\in {U}$, with $D=\lr{0,2\pi}^2$ and $U\subset \mathbb{R}$ a bounded open interval.
Define $\lr{x^1,x^2}=\lr{\mu,\nu}$. 
Let $\alpha^{ij}\in C^{\infty}\lr{\mathbb{R}^2\times U}$, $i,j=1,2$, and $S\in C^{\infty}\lr{\mathbb{R}^2\times U}$ be smooth  functions which are periodic in $D$. 
Further assume that %$\alpha^{ij}\in C^{\infty}(U)$ and $S\in C^{\infty}(U)$ are smooth functions of $\Psi$, that 
$\langle S\rangle=\int_D S\,d\mu d\nu=0$, and that 
$\alpha^{ij}$ is strictly elliptic on each level set of $\Psi$, i.e.  
\begin{equation}
\alpha^{ij}\xi_i\xi_j\geq \lambda \abs{\bol{\xi}}^2,~~~~ \bol{\xi}\in\mathbb{R}^2,~~~~\mu,\nu\in[0,2\pi),~~~~\Psi\in {U}, 
\end{equation}
for some positive constant $\lambda$. 
%that both $\alpha^{ij}$ and $S$ are periodic in 
%the coordinates $\mu$ and $\nu$, and that the average of the source term $S$ vanishes on each level set of $\Psi$, i.e. $\langle S\rangle=\int_{0}^{2\pi}d\mu\int_0^{2\pi}d\nu S=0$.  
Then, the boundary value problem
\begin{subequations}
\begin{align}
&\frac{\p}{\p x^i}\lr{\alpha^{ij}\frac{\p\rho}{\p x^j}}=S,~~~~\langle\rho\rangle=\int_0^{2\pi}d\mu\int_{0}^{2\pi}d\nu\rho =0
~~~~{\rm in}~~V,\label{PDE}\\ 
&\rho~~{\rm periodic}~~{\rm in}~~D
%\lr{0,\nu,\Psi}=\rho\lr{2\pi,\nu,\Psi}=\rho\lr{\mu,0,\Psi}=\rho\lr{\mu,2\pi,\Psi}=0
,\label{PBC}
\end{align}\label{thmX1}
\end{subequations}
 admits a unique periodic solution $\rho\in %C^{\infty}\lr{U}\cap 
 C^{\infty}\lr{\mathbb{R}^2\times U}$
 with periodic derivatives of all orders. 
 In particular, for fixed $\Psi\in U$ 
 the function of two variables   
 $\rho^{\Psi}\lr{\mu,\nu}=\rho\lr{\mu,\nu,\Psi}$ satisfies $\rho^{\Psi}\in C^{\infty}\lr{\mathbb{R}^2}\cap H^1_{\rm per}\lr{D}$. Here, 
 \begin{equation}
H^1_{\rm per}\lr{D}=\left\{
\rho^{\Psi}\in H^1\lr{D}
%_{\rm loc}\lr{\mathbb{R}^2}
;\langle\rho^{\Psi}\rangle=0,\rho^{\Psi}~~{\rm periodic}~~{\rm in}~~D
\right\}. 
 \end{equation}
 %for some function $k$ of $\Psi$.  
\end{lemma}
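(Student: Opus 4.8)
The plan is to solve \eqref{thmX1} for each fixed value of the parameter $\Psi\in U$ by a variational argument on the two-dimensional torus, to upgrade the resulting weak solution to a smooth one by elliptic regularity (which is purely interior, since the torus carries no boundary), and finally to recover smooth dependence on the parameter $\Psi$. Fixing $\Psi\in U$ and multiplying the equation in \eqref{PDE} by a test function $\phi\in H^1_{\rm per}\lr{D}$, integration by parts over $D$ produces no boundary contribution because all quantities are periodic, and one is led to seek $\rho^{\Psi}\in H^1_{\rm per}\lr{D}$ such that
\begin{equation}
a\lr{\rho^{\Psi},\phi}:=\int_D\alpha^{ij}\frac{\p\rho^{\Psi}}{\p x^i}\frac{\p\phi}{\p x^j}\,d\mu\, d\nu=-\int_D S\phi\,d\mu\, d\nu=:F\lr{\phi}\qquad\text{for all }\phi\in H^1_{\rm per}\lr{D}.
\end{equation}

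First I would check that this weak problem is well posed by the Lax--Milgram theorem. The bilinear form $a$ is bounded because the coefficients $\alpha^{ij}$ are smooth, hence bounded on $\ov{D}$, and it is coercive on $H^1_{\rm per}\lr{D}$: strict ellipticity gives $a\lr{\rho,\rho}\geq\lambda\Norm{\nabla\rho}_{L^2\lr{D}}^2$, while the Poincar\'e--Wirtinger inequality on the torus---valid precisely because the zero-mean constraint $\langle\rho\rangle=0$ removes the kernel of $\nabla$---yields $\Norm{\nabla\rho}_{L^2\lr{D}}^2\geq c\Norm{\rho}_{H^1\lr{D}}^2$ for some $c>0$. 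Since $F$ is a bounded functional on $H^1_{\rm per}\lr{D}$, Lax--Milgram delivers a unique $\rho^{\Psi}\in H^1_{\rm per}\lr{D}$; defining $\rho\lr{\mu,\nu,\Psi}:=\rho^{\Psi}\lr{\mu,\nu}$ then gives the unique periodic solution of the three-dimensional problem. Testing against the constant function $\phi\equiv 1$ gives $a\lr{\rho^{\Psi},1}=0$ and $F\lr{1}=-\langle S\rangle=0$, so the hypothesis $\langle S\rangle=0$ is exactly the compatibility condition that makes $\rho^{\Psi}$ a weak solution against all periodic test functions, not merely the mean-zero ones.

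Next I would establish that $\rho^{\Psi}$ is smooth. Because $D$ has no boundary, there is no boundary regularity to contend with and interior elliptic regularity applies globally. With $\alpha^{ij}$ and $S$ smooth and periodic, the standard difference-quotient bootstrap (or, equivalently, a Fourier-series estimate exploiting the ellipticity of the symbol) promotes $\rho^{\Psi}$ from $H^1_{\rm per}\lr{D}$ to $H^k_{\rm per}\lr{D}$ for every $k$, so that $\rho^{\Psi}\in C^{\infty}\lr{\mathbb{R}^2}$ by Sobolev embedding and periodic in $D$. This already proves the assertion $\rho^{\Psi}\in C^{\infty}\lr{\mathbb{R}^2}\cap H^1_{\rm per}\lr{D}$.

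Finally I would promote this to joint smoothness in $\lr{\mu,\nu,\Psi}$. Differentiating \eqref{PDE} formally in $\Psi$ shows that $\p_{\Psi}\rho$ should satisfy an elliptic problem with the same principal part $\alpha^{ij}$ and right-hand side $\p_{\Psi}S-\p_{x^i}\lr{\lr{\p_{\Psi}\alpha^{ij}}\p_{x^j}\rho}$; this source is smooth and periodic, it has zero mean (the term $\p_{\Psi}S$ integrates to $\p_{\Psi}\langle S\rangle=0$ and the divergence term integrates to zero over $D$), and $\langle\p_{\Psi}\rho\rangle=0$ follows by differentiating $\langle\rho\rangle=0$. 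To make this rigorous I would use difference quotients in $\Psi$: since the ellipticity constant $\lambda$ and the coefficient bounds are uniform on compact subintervals of $U$, the $\Psi$-difference quotients of $\rho$ are uniformly bounded in $H^1_{\rm per}\lr{D}$, so one extracts a weak limit and identifies it with the unique solution of the differentiated problem by uniqueness. Iterating this argument controls all derivatives $\p_{\Psi}^{k}\rho$, each again smooth and periodic in $\lr{\mu,\nu}$ by the regularity step, and combining with the $\lr{\mu,\nu}$-smoothness yields $\rho\in C^{\infty}\lr{\mathbb{R}^2\times U}$ with periodic derivatives of all orders. I expect this last step to be the main obstacle: the operator in \eqref{PDE} contains no $\Psi$-derivatives, so the three-dimensional problem on $V$ is degenerate-elliptic and no regularity in $\Psi$ can be read off directly from the equation---smoothness in the parameter has to be bootstrapped entirely through the smooth dependence of the solution operator on $\Psi$.
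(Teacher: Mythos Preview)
Your proposal is correct and follows essentially the same strategy as the paper: for fixed $\Psi$ obtain a unique weak solution in $H^1_{\rm per}(D)$ via coercivity and Poincar\'e--Wirtinger, upgrade to $C^\infty$ by interior elliptic regularity on the boundaryless torus, and then bootstrap smoothness in $\Psi$ from the differentiated equation. The only differences are cosmetic: the paper invokes Riesz representation rather than Lax--Milgram (equivalent here since the form is symmetric), phrases interior regularity as ``shift the periodic cell and use uniqueness'' rather than ``the torus has no boundary,'' and handles $\Psi$-dependence by the operator identity $\partial_\Psi L^{-1}=-L^{-1}(\partial_\Psi L)L^{-1}$ instead of difference quotients---your difference-quotient argument is in fact the more careful of the two.
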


\begin{proof}
First, observe that 
a function $\rho$ is periodic in $D$
provided that it takes the same values at opposite sides of the square, 
\begin{equation}
\rho\lr{0,\nu,\Psi}=\rho\lr{2\pi,\nu,\Psi},~~~~\rho\lr{\mu,0,\Psi}=\rho\lr{\mu,2\pi,\Psi}.
\end{equation}
Considering a two-dimensional lattice with unit cell $D$, evidently a periodic solution satisfies the property
\begin{equation}
\rho\lr{\mu,\nu,\Psi}=\rho\lr{\mu+2\pi m,\nu+2\pi n,\Psi},~~~~\forall m,n\in \mathbb{Z}.
\end{equation}
Hence, if derivatives of $\rho$ exist, they are periodic functions as well. 
Next, notice that 
for each value of $\Psi\in U$  
the strict ellipticity of $\alpha^{ij}$, the regularity and periodicity 
of both $\alpha^{ij}$ and $S$, and the condition $\langle S\rangle=0$  
guarantee that the  boundary value problem
\begin{subequations}
\begin{align}
&\frac{\p}{\p x^i}\lr{\alpha^{ij}\frac{\p{\rho^{\Psi}}}{\p x^j}}=S
%~~~~{\rm in}~~\mathbb{R}^2
,~~~~\langle\rho^{\Psi}\rangle=0~~~~{\rm in}~~D,\label{thmX2a}\\
&\rho^{\Psi}~~{\rm periodic}~~{\rm in}~~D, 
%{\rho}\lr{0,\nu,\Psi}={\rho}\lr{2\pi,\nu,\Psi}={\rho}\lr{\mu,0,\Psi}={\rho}\lr{\mu,2\pi,\Psi}=0,
\end{align}\label{thmX2}
\end{subequations}
admits a unique solution %defined up to an additive constant 
${\rho}^{\Psi}\in H^1_{\rm per}\lr{D}$ (see for example \cite{Bensoussan}). 
Here, the notation $\rho^{\Psi}\lr{\mu,\nu}=\rho\lr{\mu,\nu,\Psi}$ stresses the fact that $\rho$ is being considered a function
of the angles $\lr{\mu,\nu}$ by fixing $\Psi\in U$. 
Let us briefly review the argument behind this result. 
Denote with $C^{\infty}_{\rm 
 per}\lr{D}=\left\{\rho^{\Psi}\in C^{\infty}\lr{\mathbb{R}^2};\langle\rho^{\Psi}\rangle=0,\rho^{\Psi}~~{\rm periodic}~~{\rm in}~~D\right\}$ the set of smooth functions  periodic in $D$ and with vanishing average. 
 Note that $C^{\infty}_{\rm per}\lr{D}=C^{\infty}\lr{\mathbb{R}^2}\cap H^1_{\rm per}\lr{D}$. 
 Then, the Hilbert  space $H^{1}_{\rm per}\lr{D}$ can be identified with the completion of $C^{\infty}_{\rm per}\lr{D}$ with respect to the $H^1$ norm. 
 Now observe that the weak formulation of \eqref{thmX2} is
 \begin{equation}
\lr{\rho^{\Psi},\psi}+\mc{F}_S\left[\psi\right]=\int_{D}\lr{\alpha^{ij}\frac{\p\psi}{\p x^i}\frac{\p\rho^{\Psi} }{\p x^j}+S\psi}\,d\mu d\nu=0~~~~\forall\psi\in H_{\rm per}^{1}\lr{D}.\label{wsol}
 \end{equation}
% where the origin of the cell $D$ can be taken at any position in $\mathbb{R}^2$ since the value of the integral does not change for $\rho^{\Psi}\in H^1_{\rm per}\lr{D}$. 
Indeed, if $\rho^{\Psi}\in C^2_{\rm per}\lr{D}$ with $C^{2}_{\rm per}\lr{D}=\left\{\rho^{\Psi}\in C^2\lr{\mathbb{R}^2};\langle\rho^{\Psi}\rangle=0,\rho^{\Psi}~~{\rm periodic}~~{\rm in}~~D\right\}$, the partial derivatives $\p\rho/\p x^i$, $i=1,2$, are periodic, and therefore integration by parts shows that $\rho^{\Psi}$ is a classical solution.
We also remark that if $\rho^{\Psi}$ is a weak solution in the sense of \eqref{wsol}, it can also be tested against any $\psi_0\in H^1_0\lr{D}$ since $\psi_0-\langle\psi_0\rangle /4\pi^2\in H^1_{\rm per}\lr{D}$ by periodic extension of $\psi_0$ to $\mathbb{R}^2$. The converse is however not true, since a solution $\rho^{\Psi}_0\in H^1_0\lr{D}$ of the standard  Dirichlet boundary value problem cannot be tested against functions $\psi\in H^1_{\rm per}\lr{D}$, i.e. $\lr{\rho^{\Psi}_0,\psi}+\mc{F}_S\left[\psi\right]\neq 0$ in general.

Next, note that the inner product
\begin{equation}
\lr{\rho^{\Psi},\psi}=\int_D\alpha^{ij}\frac{\p\rho^{\Psi}}{\p x^i}\frac{\p \psi}{\p x^j}\,d\mu d\nu,
\end{equation}
defines a norm $\Norm{\rho^{\Psi}}_{H^1_{\rm per}\lr{D}}=\lr{\rho^{\Psi},\rho^{\Psi}}^{1/2}$ in $H^1_{\rm per}\lr{D}$ 
due to the strict ellipticity of $\alpha^{ij}$, 
\begin{equation}
\lr{\rho^{\Psi},\rho^{\Psi}}\geq \lambda \Norm{\nabla_{\lr{\mu,\nu}}\rho^{\Psi}}_{L^2\lr{D}}^2\geq C\Norm{\rho^{\Psi}}_{H^1\lr{D}}^2,
\end{equation}
for some constant $C>0$ and 
where in the last passage we used the Poincar\'e inequality \cite{Necas}, the fact that $\langle\rho^{\Psi}\rangle=0$, and introduced the notation 
\begin{equation}
\begin{split}
&\Norm{\rho^{\Psi}}_{L^2\lr{D}}^2=\int_D\lr{\rho^{\Psi}}^2\,d\mu d\nu,\\&\Norm{\nabla_{\lr{\mu,\nu}}\rho^{\Psi}}_{L^2\lr{D}}^2=\int_D\left[\lr{\rho^{\Psi }_{\mu}}^2+\lr{\rho_{\nu}^{\Psi}}^2\right]\,d\mu d\nu,\\&
\Norm{\rho^{\Psi}}_{H^1\lr{D}}^2=\Norm{\rho^{\Psi}}_{L^2\lr{D}}^2+\Norm{\nabla_{\lr{\mu,\nu}}\rho^{\Psi}}_{L^2\lr{D}}^2.
%,\\&\Norm{\rho}_{H^1\lr{D}}^2=\Norm{\rho}^2_{L^2\lr{D}}+\Norm{\nabla_{\lr{\mu,\nu}}\rho}^2_{L^2\lr{D}}.
\end{split}
\end{equation}
%$\Norm{\rho}_{H^1\lr{D}}^2=\Norm{\rho}_{L^2\lr{D}}^2+\Norm{\nabla_{\lr{\mu,\nu}}\rho}_{L^2\lr{D}}^2$ with $\abs{\nabla_{\lr{\mu,\nu}}\rho}^2=\rho_{\mu}^2+\rho_{\nu}^2$. 
Hence, $H^1_{\rm per}\lr{D}$ is a Hilbert space with respect to the norm $\Norm{\cdot}_{H^1_{\rm per}\lr{D}}$. 
Finally, the linear functional 
\begin{equation}
\mc{F}_{S}\left[\psi\right]=\int_{D}S\psi\,d\mu d\nu\leq C\Norm{\psi}_{H^1_{\rm per}\lr{D}},
\end{equation}
is bounded, with $C>0$ a constant. Hence, the Riesz representation theorem guarantees the existence of a unique element $\rho^{\Psi}\in H^1_{\rm per }\lr{D}$ such that $F_{S}\left[\psi\right]=-\lr{\rho^{\Psi},\psi}$, which thus provides a weak solution of \eqref{thmX2}.
 
%The=0 a dditive constant can be further determined by demanding that $\langle\rho\rangle=0$. 
%See, for example, the discussion in \cite{Bensoussan} on this point. 

%As previously stated, 
The construction above applies even if the origin of the cell $D$  
is shifted by an arbitrary amount in $\mathbb{R}^2$. 
Let $D'\subset\mathbb{R}^2$ denote the shifted cell and $\rho'^{\Psi}\in H^1_{\rm per}\lr{D'}=H^1_{\rm per}\lr{D}$ the corresponding solution. 
By interior regularity, 
any irregularity of the solution ${\rho'}^{\Psi}$ that may occur on the boundary $\p D'$ %of a bounded open region $D'\subset \mathbb{R}^2$ 
%of the doubly periodic domain $D$ 
cannot affect the interior of the domain,
and it can be shown that the regularity of $\alpha^{ij}$ and $S$ is propagated to $\rho'^{\Psi}$. 
In particular, ${\rho'^{\Psi}\in C^{\infty}}\lr{D'}$ (see \cite{Evans}). 
Since we may take $D'\cap D\neq \emptyset$ and $\rho^{\Psi}=\rho'^{\Psi}$ by uniqueness, %(for example, $D'$ could be made of $9$ cells arranged in a square with $3$ cells per side and having the cell $D$ at the center), 
this also implies the regularity of the derivatives of $\rho^{\Psi}$ at the original cell boundary $\p D$, and thus their periodicity. 
We conclude that $\rho^{\Psi}\in C^{\infty}_{\rm per}\lr{D}$ and that all partial derivatives of any order of the function $\rho^{\Psi}$ are periodic functions in $D$.

%In addition, 
%since by construction both $\alpha^{ij}\in C^{\infty}(\bar{D})$ and $S\in C^{\infty}(\bar{D})$ are smooth up to the boundary, 
%all derivatives of $\rho$ are square integrable in $D$, i.e. $\rho\in H^{k}_0\lr{D}$ $\forall k\in\mathbb{N}$. 
%Next, define
%\begin{equation}
%\rho=\tilde{\rho}+k,~~~~k=-\frac{1}{4\pi^2}\int_{0}^{2\pi}d\mu\int_{0}^{2\pi}d\nu\tilde{\rho}.
%\end{equation}
%Evidently, the function $\rho\in H_k^{1}\lr{D}$ satisfies 
%\begin{subequations}
%\begin{align}
%&\frac{\p}{\p x^i}\lr{\alpha^{ij}\frac{\p{\rho}}{\p x^j}}=S,~~~~\langle\rho\rangle=\int_0^{2\pi}d\mu\int_0^{2\pi}d\nu\rho=0~~~~{\rm in}~~D,\\
%&\tilde{\rho}\lr{0,\nu,\Psi}=\tilde{\rho}\lr{2\pi,\nu,\Psi}=\tilde{\rho}\lr{\mu,0,\Psi}=\tilde{\rho}\lr{\mu,2\pi,\Psi}=k,
%\end{align}\label{thmX2}
%\end{subequations}

We are now left with the task of showing that solutions of \eqref{thmX2}
corresponding to different values of $\Psi$ define a smooth function in the variable $\Psi$. 
%In particular, we claim that 
%\begin{equation}
%\rho_{\Psi}=\rho_0+\sum_{n=1}^{\infty}{\rho_n}\Psi^n~~~~{\rm in}~~V,\label{thmX3}
%\end{equation}
%where $\rho_{\Psi}$ is the solution of equation \eqref{thmX2} on a level set $\Psi\in U$,
%$\rho_0$ the solution of \eqref{thmX2} at $\Psi=0$, and $\rho_n\in C^{\infty}\lr{D}$ appropriately defined functions. 
%Being a convergent power series, if true equation \eqref{thmX3} would guarantee the analyticity of 
%$\rho=\rho_{\Psi}$ in $U$, and its smoothness in the whole $V$, thus providing the desired solution of \eqref{thmX1}. 
To see this,  
%note that without loss of generality we may assume $0\in U$. 
%Next, being analytic functions in the variables $\Psi$, 
%both $\alpha^{ij}$ and $S$ can be written as power series
%\begin{equation}
%\alpha^{ij}=\sum_{n=0}^{\infty}\alpha_{n}^{ij}\Psi^n,~~~~S=\sum_{n=0}^{\infty}S_{n}\Psi^n,
%\end{equation}
%where the coefficients $\alpha^{ij}_n$ and $S_n$ are smooth functions of $\mu$ and $\nu$. 
it is convenient to 
introduce the linear differential operators 
\begin{equation}
L=\frac{\p}{\p x^i}\lr{\alpha^{ij}\frac{\p}{\p x^j}},~~~~L_{\Psi}=\frac{\p}{\p x^i}\lr{\alpha^{ij}_{\Psi}\frac{\p}{\p x^j}},%=\sum_{n=0}^{\infty}L_n\Psi^n.%,~~~~
%L_n=\frac{\p}{\p x^i}\lr{\alpha_{n}^{ij}\frac{\p}{\p x^j}},~~~~n=0,1,2,...~.
\end{equation}
where $L_{\Psi}={\p L}/{\p\Psi}$ and we used the fact that $\alpha^{ij}$ is smooth in the variable $\Psi$ to evaluate $\alpha^{ij}_{\Psi}=\p\alpha^{ij}/\p\Psi$. The first equation in \eqref{thmX2} thus takes the form $L\rho^{\Psi}=S$. 
%Since both $\alpha^{ij}$ and $S$ are smooth in $\Psi$,
%they can be differentiated with respect to $\Psi$. 
%Then, if it exists, the partial derivative $\p\rho/\p\Psi$ must satisfy 
%\begin{equation}
%L\frac{\p\rho}{\p\Psi}=\frac{\p S}{\p\Psi}-\frac{\p L}{\p\Psi}\rho.\label{rhopsi1}
%\end{equation}
%\begin{equation}
%L_0\rho=S_0+\sum_{n=1}^{\infty}\lr{S_n-L_n\rho}\Psi^n.
%\sum_{n=0}^{\infty}{L_n\rho}\Psi^n=\sum_{n=0}^{\infty}S_n\Psi^n.
%\label{thmX4}
%\end{equation}
%Since the right-hand side of this equation is analytic, each term $L_n\rho$ on the left-hand side must be an analytic function, i.e. $L_n\rho=\ell_n\Psi^n$ for functions $\ell_n\lr{\mu,\nu}\in C^{\infty}_{\rm per}\lr{D}$. 
%On the other hand, 
Furthermore, the linear operator $L$ %is nothing but the differential operator 
%appearing in the first equation of system \eqref{thmX2} when $\Psi=0$.  
%In particular, it 
defines an invertible linear mapping from the function space $C^{\infty}_{\rm per}\lr{D}$ to itself (for $S\in C^{\infty}_{\rm per}\lr{D}$, $L\rho^{\Psi}=S$ admits a unique solution in $C^{\infty}_{\rm per}\lr{D}$).
%vanishing on the boundary $\p D$
%with vanishing average $\langle \rho\rangle=0$ %and periodic boundary conditions 
Denoting with $L^{-1}$ the inverse, it follows that
\begin{equation}
0=\frac{\p\lr{LL^{-1}}}{\p\Psi}=L_{\Psi}L^{-1}+LL^{-1}_{\Psi},
\end{equation}
where $L_{\Psi}^{-1}=\p L^{-1}/\p \Psi$. 
%and noting that $S_{\Psi}-L_{\Psi}\rho\in C^{\infty}_{\rm per}\lr{D}$, 
%we observe that $L^{-1}_0$ must be linear as well since $L^{-1}_0\lr{a S+a' S'}=L^{-1}_0L_0\lr{a\rho+a'\rho'}=aL^{-1}_0S+a'L^{-1}_0S'$ for all $a,a'\in\mathbb{R}$, source terms $S,S'\in C^{\infty}_{\rm per}\lr{D}$, and corresponding solutions $\rho,\rho'\in C^{\infty}_{\rm per}\lr{D}$ of the boundary value problem. 
%Thus, 
Since $L^{-1}\lr{0}=0$, application of $L^{-1}$ to the equation above  % \eqref{rhopsi1} 
gives 
\begin{equation}
L_{\Psi}^{-1}=-L^{-1}L_{\Psi}L^{-1},\label{Lm1}
%\frac{\p\rho}{\p\Psi}=L^{-1}\lr{\frac{\p S}{\p\Psi}-\frac{\p L}{\p\Psi}\rho}\in C^{\infty}_{\rm per}\lr{D}. 
\end{equation}
so that the $\Psi$-derivative of the inverse operator $L^{-1}$ 
is expressed in terms of the operators $L^{-1}$ and $L_{\Psi}$.  
Higher order derivatives of the operator $L^{-1}$ can be determined by differentiating \eqref{Lm1} with respect to $\Psi$. 
We now consider $\rho$ as a function of the three variables $\lr{\mu,\nu,\Psi}$. 
From $\rho=L^{-1}S$, and observing that the quantity $S_{\Psi}-L_{\Psi}\rho$ belongs to $C^{\infty}_{\rm per}\lr{D}$ when intended as a function of $\mu,\nu$, we thus conclude that
\begin{equation}
%L_{\Psi}^{-1}=-L^{-1}L_{\Psi}L^{-1},
\frac{\p\rho}{\p\Psi}=L_{\Psi}^{-1}S+L^{-1}S_{\Psi}=L^{-1}\lr{S_{\Psi}-L_{\Psi}\rho},~~~~\lr{\frac{\p\rho}{\p\Psi}}^{\Psi}\in C^{\infty}_{\rm per}\lr{D},\label{rhopsi1} 
\end{equation}
where $\lr{\p\rho/\p\Psi}^{\Psi}$ denotes the two variables function 
obtained by fixing $\Psi$ in $\p\rho/\p\Psi$. 
Similarly, $\p^2\rho/\p\Psi^2$ and higher order partial derivatives can be evaluated by repeatedly differentiating $\rho=L^{-1}S$ with respect to $\Psi$. Hence, for each $\Psi\in U$ derivatives of $\rho$ with respect to $\Psi$ of all order exist and belong to $C^{\infty}_{\rm per}\lr{D}$. 
%and by applying $L^{-1}$.  
%\begin{equation}
%\rho=\rho_0+\sum_{n=1}^{\infty}L_0^{-1}\lr{S_n-L_n\rho}\Psi^n
%%\sum_{n=0}^{\infty}L^{-1}S_n\Psi^n
%,\label{thmX5}
%\end{equation}
%where we used the fact that $\sum_{n=1}^NL_{-1}\gamma_n-L^{-1}\sum_{n=1}^{\infty}\gamma_n=L^{-1}\lr{\sum_{n=1}^N\gamma_n-\sum_{n=1}^{\infty}\gamma_n}$ tends to $L^{-1}\lr{0}=0$
%in the limit $N\rightarrow\infty$, 
%with $\gamma_n=\lr{S_n-L_n\rho}\Psi^n$, %for $\gamma_n\in H^1_{\rm per}\lr{D}$, 
%because the integral operator $L^{-1}_0$, which is the inverse of an elliptic operator, is continuous (see e.g. \cite{Fink}) and therefore commutes with the sum $\sum_{n=1}^{\infty}$.
%since by multiplying both sides by $L_0^{-1}$ one obtains
%$L_0^{-1}$ is continuous, being the inverse of a continuous operator, implying that $\sum_{n= 1}^NL_0^{-1}\lr{S_n-L_n}\Psi^n$ converges to $L_0^{-1}\sum_{n= 1}^{\infty}\lr{S_n-L_n}\Psi^n$ in the $H^1$ norm when $N\rightarrow\infty$. 
%Hence, for each $\Psi\in U$, the corresponding solution $\rho_{\Psi}\lr{\mu,\nu}\in C^{\infty}\lr{\mathbb{R}^2}\cap H^1_{\rm per}\lr{D}$ of \eqref{thmX2}  
%can be written as a convergent power series \eqref{thmX5} in the variable $\Psi$. 
It follows that the function $\rho$ is smooth in the variable $\Psi$, and 
%Such solution 
therefore provides a unique solution $\rho\in % C^{\infty}\lr{U}\cap 
C^{\infty}\lr{\mathbb{R}^2\times U}$ with period $D$ 
of the original boundary value problem \eqref{thmX1}
such that 
$\rho^{\Psi}\in C^{\infty}\lr{\mathbb{R}^2}\cap H^1_{\rm per}\lr{D}$. 
\end{proof}

We are now ready to prove theorem 1: 
\begin{proof}
By hypothesis, the function $\Psi$ is smooth and foliates the domain $\Omega$ with nested toroidal surfaces spanned by angle coordinates $\mu,\nu$. 
The smoothness of the derivatives of the curvilinear coordinate system $\lr{x^1,x^2,x^3}=\lr{\mu,\nu,\Psi}$ ensures that the
components $g_{\mu\mu}$, $g_{\mu\nu}$, $g_{\nu\nu}$, $g_{\Psi\mu}$, $g_{\nu\Psi}$, $g_{\Psi\Psi}$ 
of the metric tensor and the Jacobian $J$
are smooth functions in $\Omega$. Indeed, they can be expressed in terms of derivatives of the coordinates. For example
\begin{equation}
g_{\mu\mu}=\frac{g^{\nu\nu}g^{\Psi\Psi}-\lr{g^{\nu\Psi}}^2}{J^2}=\frac{\abs{\nabla\nu}^2\abs{\nabla\Psi}^2-\lr{\nabla\nu\cdot\nabla\Psi}^2}{\lr{\nabla\mu\cdot\nabla\nu\cp\nabla\Psi}^2}.
\end{equation}
Hence, the two-dimensional matrix $A$ 
defined in \eqref{A} has smooth components
$a^{ij}$, $i,j=1,2$, in $\Omega$. 
Furthermore, as shown in the previous section
the matrix $A$ is symmetric and positive definite, and such that the corresponding differential operator $\p_i\lr{Ja^{ij}\p_j}$ in \eqref{redeq0} 
is strictly elliptic on each $\Psi$ contour (notice that by hypothesis $J\geq J_m>0$ for some positive constant $J_m$ so that the strict ellipticity of $a^{ij}$ implies the strict ellipticity of $Ja^{ij}$). 
Recalling that the composition of smooth functions is smooth, it is now clear that the hypothesis of lemma 1 are satisfied with $\alpha^{ij}=Ja^{ij}$ and  
source term $S$ given by  
\begin{equation}
S=\frac{\p}{\p\nu}\lr{Jg_{\mu\nu}}-\frac{\p}{\p\mu}\lr{Jg_{\nu\nu}}. 
\end{equation}
In particular, observe that $S$ is smooth.  
Let $\rho\in %C^{\infty}\lr{U}\cap 
C^{\infty}\lr{\mathbb{R}^2\times U}$ 
denote the periodic classical solution of equation \eqref{redeq0} obtained in accordance with lemma 1. Evidently, $\rho\in C^{\infty}\lr{\Omega}$ as well.
Setting $\Theta=\mu+\rho$ and recalling equation \eqref{wcurlw}, it follows that the vector field
\begin{equation}
\bol{w}=\nabla\Psi\cp\nabla\Theta=J\lr{\Theta_{\mu}\p_{\nu}-\Theta_{\nu}\p_{\mu}}=J\p_{\nu}+\nabla\Psi\cp\nabla\rho,\label{wF}
\end{equation}
is a solution $\bol{w}\in C^{\infty}\lr{\Omega}$
of system \eqref{eq0}. 
To see this, %first recall that equation \eqref{wcurlw} implies that $\bol{w}\in L^2\lr{\Omega}$. Indeed, although $\Theta_{\mu}=1+\rho_{\mu}$ and $\Theta_{\nu}=\rho_{\nu}$ may fail to be periodic on $\p D$, they are smooth within the periodic domain $D$ and therefore admit a square integrable trace on $\p D$. By a similar argument applied to the partial derivatives $\Theta_{\mu\mu}$, $\Theta_{\mu\nu}$, $\Theta_{\nu\nu}$,  $\Theta_{\mu\Psi}$ and $\Theta_{\nu\Psi}$, one 
%concludes that $\nabla\cp\bol{w}\in L^2\lr{\Omega}$ as well. 
%In addition, all discontinuities that may arise
%in $\bol{w}$ or $\nabla\cp\bol{w}$ must be located on $\p D$, while $\bol{w}$ is smooth elsewhere.  
%Furthermore, 
%Indeed, it is clear 
first recall that $\rho$ is a smooth solution of equation \eqref{redeq0}, and thus the vector field \eqref{wF} fulfills equation \eqref{eq1X} in the hollow torus $\Omega$. 
Since 
$\nabla\cdot\bol{w}=0$, the vector field \eqref{wF} therefore solves system \eqref{eq0} in  $\Omega$. 
Furthermore, the vector field \eqref{wF} is non-vanishing since 
\begin{equation}
\begin{split}
\langle\Theta_{\mu}\rangle=\int_{D}\Theta_{\mu}\,d\mu d\nu=\int_{D}\lr{1+\rho_{\mu}}\,d\mu d\nu=4\pi^2.
\end{split}
\end{equation}
%Since $\delta$ can be made arbitrarily small and $\chi$ is smooth in its arguments, the first and third integrals can be made arbitrarily small, while the term in the middle approaches the value $-4\pi$. Hence, $\int_{B\cup C}\Theta_{\mu}\,d\mu d\nu <0$ and 
%$\Theta_{\mu}\neq 0$. 
Recalling that the partial derivative  $\Theta_{\mu}$ is smooth and that    
$\bol{w}=J\lr{\Theta_{\mu}\p_{\nu}-\Theta_{\nu}\p_{\mu}}$ it follows that $\bol{w}\neq\bol{0}$ in some open set within $\Omega$. 
It may happen however that the solution $\bol{w}$ is a curl-free (vacuum) solution 
$\nabla\cp\bol{w}=\bol{0}$, 
or a Beltrami field $\nabla\cp\bol{w}=\hat{h}\bol{w}$ for some proportionality coefficient $\hat{h}\lr{\bol{x}}\neq 0$. 
Nevertheless, denoting with
$f\lr{\Psi}\neq 0$ any smooth function of the variable $\Psi$ such that $\p f/\p\Psi\neq 0$, 
it readily follows that in such scenario 
the vector field $\bol{w}'=f\lr{\Psi}\bol{w}$
is a nontrivial solution of \eqref{eq0}. Indeed, 
recalling that by construction $\bol{w}\cdot\nabla\Psi=0$, one has
\begin{equation}
\lr{\nabla\cp\bol{w}'}\cp\bol{w}'=-\frac{1}{2}\frac{\p f^2}{\p\Psi}\bol{w}^2\nabla\Psi\neq\bol{0},~~~~\nabla\cdot\bol{w}'=\frac{\p f}{\p\Psi}\nabla\Psi\cdot\bol{w}=0.
\end{equation}
%Finally, if $\rho$ and its derivatives are periodic in $D$, the vector field
%$\bol{w}$ will be smooth in the whole $\Omega$. 
%This concludes the proof. 
%$\langle\Theta\rangle=4\pi^3$ . 
\end{proof}

%By an orthogonal transformation
%$a^{ij}$ can be transformed in diagonal form. A further rescaling of one varable gives the identity. The angles are not changed, hence Azzam theorem applies. Cite also Lieberman and the other guy. In particular the solution is in $C^{1,\alpha}(\bar{D})$, $0<\alpha<1$, which implies $\rho\in C^1(\bar{D})$ and also 
%$\rho_{ij}\in L^2\lr{D}$. 
%The value of second derivatives is not well defined when approaching corners from different directions?

\begin{remark}
In the original formulation of the problem \eqref{eq1}, the domain $\Omega$ is a torus. However, the result of theorem 1 applies to a hollow torus. 
For the solution $\bol{w}$ of theorem 1 to hold in the hollow region as well, the vector field $\bol{w}$ must be well defined when approaching the toroidal axis. 
This is often the case, as it will be shown in the example constructed in section 6. 
\end{remark}

\begin{remark}
In the study of the vorticity equation for fluid flows over two-dimensional surfaces parametrized by $\Psi$ and embedded in three-dimensional Euclidean space, the relationship between the component of the vorticity $\omega^{\Psi}=\bol{\omega}\cdot\nabla\Psi$ and the stream function $\Theta$ is precisely $\nabla\cdot\left[\nabla\Psi\cp\lr{\nabla\Theta\cp\nabla\Psi}\right]=-\omega^{\Psi}$ (see \cite{SY22}). The result of lemma 1 thus implies that one can solve for the stream function $\Theta$ knowing the vorticity $\omega^{\Psi}$. Notice in particular that the topology of the level sets of $\Psi$ does not need to be toroidal.  
\end{remark}

\section{Example of numerical solution}

The aim of this section is to provide a numerical example of solution of equation \eqref{redeq0}. This example will also clarify the role played by periodic boundary conditions in ensuring the regularity of the solution $\bol{w}$ of \eqref{eq1} and its derivatives. 
To this end, we consider a family of toroidal surfaces corresponding to level sets of the function 
\begin{equation}
\Psi=\frac{1}{2}\lr{r-r_0}^2+\frac{1}{2}\mc{E}\lr{z-h}^2,\label{Psi}
\end{equation}
with $h=h\lr{\varphi,z}$, $r_0>0$ and $\mc{E}>0$ real constants, 
and $\lr{r,\varphi,z}$ cylindrical coordinates. 
The constant $r_0$ represents the major radius of the torus.  
When $\mc{E}=1$ and $h=0$, 
level sets of \eqref{Psi} 
correspond to axially symmetric toroidal surfaces enclosing a toroidal volume $\Omega$ with circular cross-section $\Sigma_{\varphi}=\left\{\bol{x}\in\Omega:\varphi=\varphi_0\in\left[0,2\pi\right)\right\}$. 
If $\mc{E}\neq 1$ the cross-sections $\Sigma_{\varphi}$ 
depart from circles, while a non-zero $h$ can be regarded as a displacement of the toroidal axis in the vertical direction. 
Figure \ref{fig2} shows 
examples of toroidal surfaces obtained as level sets of \eqref{Psi}. 
\begin{figure}[h]
\hspace*{-0cm}\centering
\includegraphics[scale=0.5]{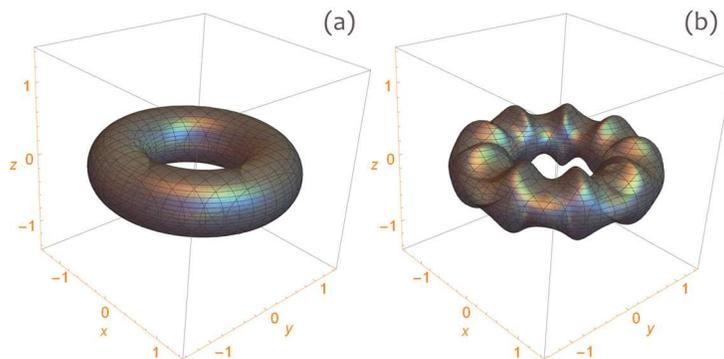}%0.220
\caption{\footnotesize (a) Axially symmetric torus corresponding to the level set $\Psi=0.08$ with $r_0=1$, $\mc{E}=1$, and $h=0$ in equation \eqref{Psi}. (b) Torus corresponding to the level set $\Psi=0.08$ with $r_0=1$, $\mc{E}=1.6$, $h=0.3 z\sin\lr{9\varphi}$ in equation \eqref{Psi}.}
\label{fig2}
\end{figure}
Notice that an appropriate choice of the function 
$h$ breaks the rotational (axial) symmetry of the surface. More generally, it is possible to construct toroidal surfaces that are not invariant under continuous Euclidean isometries (combinations of translations and rotations). 
For example, setting $h=\epsilon z \sin\lr{m\varphi}$ with $\epsilon>0$ a real constant 
and $m\neq 0$ an integer, 
the corresponding toroidal surface is not invariant under continuous Euclidean isometries. Indeed, the Lie-derivative 
\begin{equation}
\mf{L}_{\bol{a}+\bol{b}\cp\bol{x}}\Psi=\lr{\bol{a}+\bol{b}\cp\bol{x}}\cdot\nabla\Psi,\label{Lie1}
\end{equation}
where $\bol{a}+\bol{b}\cp\bol{\xi}$, $\bol{a},\bol{b}\in\mathbb{R}^3$, is the generator of continuous Euclidean isometries in $\mathbb{R}^3$, vanishes only if $\bol{a}=\bol{b}=\bol{0}$. 
This can be verified by evaluating \eqref{Lie1} 
on the planes $z=0$, $\varphi=0$, and $\varphi=\pi/2$, which respectively give the conditions $a_x=a_y=0$, $b_x=b_z=0$, and $b_y=a_z=0$. 
%, where $a_x$, $a_y$, $a_z$, $b_x$, $b_y$, and $b_z$ are the Cartesian components of $\bol{a}$ and $\bol{b}$. 
Therefore, the example shown in figure \ref{fig2}(b) is not invariant under continuous Euclidean isometries.   

In order to construct a solution $\bol{w}$ of system \eqref{redeq0}, we must now define angle coordinates $\mu,\nu\in [0,2\pi)$ spanning the toroidal surfaces $\Psi$. 
In particular, we consider the curvilinear coordinates $\lr{\mu,\nu,\Psi}=\lr{\varphi,\vartheta,\Psi}$ with 
\begin{equation}
\vartheta=\arctan\lr{\frac{z}{r-r_0}},
\end{equation}
the poloidal angle. 
The contravariant components of the metric tensor can be evaluated to be
\begin{equation}
\begin{split}
g^{\varphi\varphi}&=\frac{1}{r^2},~~~~g^{\varphi\vartheta}=0,~~~~g^{\vartheta\vartheta}=\frac{1}{z^2+\lr{r-r_0}^2},~~~~g^{\varphi\Psi}=-\mc{E}\frac{z-h}{r^2}h_{\varphi},\\
g^{\vartheta\Psi}&=\frac{r-r_0}{z^2+\lr{r-r_0}^2}\left[
\mc{E}\lr{z-h}\lr{1-h_z}-z
\right],~~~~g^{\Psi\Psi}=\lr{r-r_0}^2+\mc{E}^2\lr{z-h}^2\left[\lr{1-h_z}^2+\frac{h_{\varphi}^2}{r^2}\right].
\end{split}
\end{equation}
The covariant components are
\begin{equation}
\begin{split}
g_{\varphi\varphi}&=\frac{g^{\vartheta\vartheta}g^{\Psi\Psi}-\lr{g^{\vartheta\Psi}}^2}{J^2},~~~~g_{\varphi\vartheta}=\frac{g^{\vartheta\Psi}g^{\varphi\Psi}}{J^2},~~~~g_{\vartheta\vartheta}=\frac{g^{\varphi\varphi}g^{\Psi\Psi}-\lr{g^{\varphi\Psi}}^2}{J^2},\\g_{\varphi\Psi}&=-\frac{g^{\varphi\Psi}g^{\vartheta\vartheta}}{J^2},~~~~
g_{\vartheta\Psi}=-\frac{g^{\varphi\varphi}g^{\vartheta\Psi}}{J^2},~~~~g_{\Psi\Psi}=\frac{g^{\varphi\varphi}g^{\vartheta\vartheta}}{J^2}.
\end{split}\label{covg}
\end{equation}
We also have
\begin{equation}
J=\nabla\varphi\cdot\nabla\vartheta\cp\nabla\Psi=\frac{\lr{r-r_0}^2+\mc{E}z\lr{1-h_z}\lr{z-h
    }}{r\left[z^2+\lr{r-r_0}^2\right]}.\label{Jac}
\end{equation}
Next, let us consider a vertical axial displacement 
$h=\epsilon z\sin\lr{m\varphi}$. 
In this case, the inverse coordinate transformation reads  
%for the case $h=\epsilon z\sin\lr{m\varphi}$ reads
\begin{equation}
z^2=\frac{2\Psi\sin^2\vartheta}{\cos^2\vartheta+\mc{E}\left[1-\epsilon\sin\lr{m\varphi}\right]^2\sin^2\vartheta},~~~~\lr{r-r_0}^2=\frac{2\Psi\cos^2\vartheta}{\cos^2\vartheta+\mc{E}\left[1-\epsilon\sin\lr{m\varphi}\right]^2\sin^2\vartheta}.\label{inv}
\end{equation}
Using \eqref{inv}, the metric coefficients \eqref{covg} and the Jacobian \eqref{Jac}
can be expressed explicitly as functions of $\lr{\mu,\nu,\Psi}$. 
Hence, we may attempt to solve equation \eqref{redeq0} in the doubly periodic domain $D$.
If a periodic solution $\rho$ with periodic derivatives could be found, 
the corresponding vector field $\bol{w}=\nabla\Psi\cp\nabla\Theta$ with $\Theta=\mu+\rho$ would provide 
the desired solution in $\Omega$. 
However, this task is not trivial, since the space of solutions is effectively restricted to functions $\rho=\sum_{m,n}c_{mn}\lr{\Psi}e^{{\rm i}\lr{m\mu+n\nu}}$ that are represented by a convergent Fourier series in the variables $\mu$ and $\nu$.
Nevertheless, numerical solutions can be obtained
in a rather straightforward fashion by 
sacrificing 
the continuity of the partial derivatives  $\rho_{\mu}$ and $\rho_{\nu}$ (and thus the continuity of $\bol{w}$ and $\nabla\cp\bol{w}$, recall \eqref{wcurlw}) on the points $\bol{x}\in\Omega$ corresponding to $\lr{\mu,\nu}\in\p D$. Indeed, 
coupling equation \eqref{redeq0} with 
Dirichlet boundary conditions
\begin{equation}
\rho\lr{0,\nu,\Psi}=\rho\lr{2\pi,\nu,\Psi}=\rho\lr{\mu,0,\Psi}=\rho\lr{\mu,2\pi,\Psi}=0,\label{Dbc}
\end{equation}
results in a usual elliptic problem that can be approached with standard numerical tools. 
The corresponding solution $\rho$ will be periodic in the variables $\mu$ and $\nu$, although only 
the partial derivative of $\rho$ tangential to the boundary $\p D$ will be periodic, while the
normal component will not. 
For completeness, we also note that the regularity of the function $\rho$ at the boundary $\p D$ is 
obstructed by the corners of the square domain $D$, 
which give $\rho\in C^{1,\alpha}(\bar{D})$ with H\"older coefficient $0<\alpha<1$ (see \cite{Azzam,Azzam2}). 

%of the numerical solution $\rho$ 
%and by setting $\bol{w}=\nabla\Psi\cp\nabla\lr{\mu+\rho}$.  

Figure \ref{fig3} shows two examples of numerical solution of equation \eqref{redeq0} with Dirichlet boundary conditions \eqref{Dbc}. 
Notice that while $\rho$ is periodic in $\mu$ and $\nu$, $\rho_{\mu}$ is periodic only between $\nu=0$ and $\nu=2\pi$, while it takes different values at $\mu=0$ and $\mu=2\pi$. Analogous considerations apply to $\rho_{\nu}$, $\Theta_{\mu}=1+\rho_{\mu}$, and $\Theta_{\nu}=\rho_{\nu}$. 
As already explained at the end of section 3, 
this implies that the corresponding vector fields   $\bol{w}$ and $\nabla\cp\bol{w}$, which are given by \eqref{wcurlw}, will exhibit discontinuities at  the points $\bol{x}\in\Omega$ corresponding to $\lr{\mu,\nu}\in\p D$. 
%In particular, the solution $\bol{w}$ belongs to the space $H^1_{\sigma\sigma}\lr{\Omega}$. 
\begin{figure}[h]
\hspace*{-0cm}\centering
\includegraphics[scale=0.5]{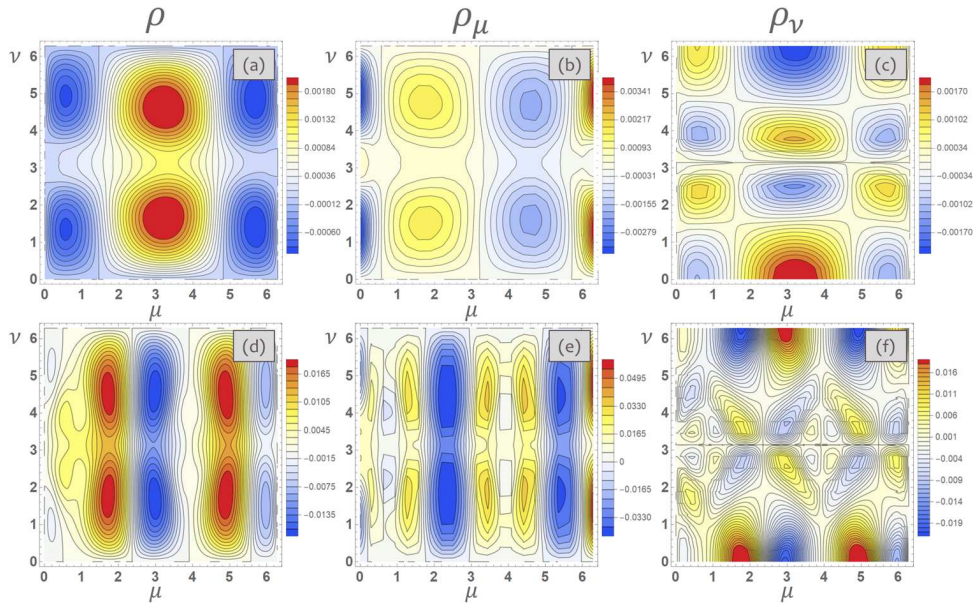}%0.220
\caption{\footnotesize (a), (b), and (c): numerical solution $\rho$ of equation \eqref{redeq0} with Dirichlet boundary conditions \eqref{Dbc} and its partial derivatives $\rho_{\mu}$ and $\rho_{\nu}$ for $r_0=1$, $m=1$,   $\mc{E}=1.6$, $h=\epsilon z\sin\lr{m\varphi}$, and $\epsilon=0.03$ on the toroidal surface $\Psi=0.16$, with $\Psi$ given by equation \eqref{Psi}. (d), (e), and (f): numerical solution $\rho$ of equation \eqref{redeq0} with Dirichlet boundary conditions \eqref{Dbc} and its partial derivatives $\rho_{\mu}$ and $\rho_{\nu}$ for $r_0=1$, $m=2$, $\mc{E}=1.6$, $h=\epsilon z\sin\lr{m\varphi}$, and $\epsilon=0.3$ on the toroidal surface $\Psi=0.08$, with $\Psi$ given by equation \eqref{Psi}.}
\label{fig3}
\end{figure}
Finally, we remark that, in principle, the smooth vector field $\bol{w}$ constructed in theorem 1 could be numerically computed by expanding in Fourier series each term in equation \eqref{redeq0} and by solving for the Fourier coefficients of the solution $\rho$.  

%\begin{figure}[h]
%\hspace*{-0cm}\centering
%\includegraphics[scale=0.5]{fig2.eps}%0.220
%\caption{\footnotesize (a) . (b) . (c). $\Psi=0.24$}
%\label{fig3}
%\end{figure}

%\section{Analytic examples of smooth solutions}

%\section{Example of smooth solution}
%In this section, we construct an example of smooth vector field $\bol{w}$ solving \eqref{eq1} with $\lr{\nabla\cp\bol{w}}\cp\bol{w}\neq\bol{0}$ in a toroidal domain $\Omega$
%which is not invariant under continuous Euclidean isometries. 

%\begin{equation}
%\mf{L}_{{\bol{a}+\bol{b}\cp\bol{x}}}\Psi=
%-b_yr_0z \cos\lr{\frac{\pi}{2m}}+b_xr_0z\sin\lr{\frac{\pi}{2m}}+z a_z.
%\end{equation}
%Excluding the line $z=0$ on the section $\varphi=\pi/2m$,
%it follows that the expression above vanishes if

%\begin{equation}
%\alpha\lr{\Psi,\zeta}\nabla\Theta\cp\nabla\Psi=\nabla\zeta+\sigma\nabla\Psi.
%\end{equation}

%$\nu=\arctan\lr{z/\lr{r-r_0}}$

%relevant figure of solution

%check asymmetry (for $w$ use $w^2$)

%\begin{equation}
%\Psi=\frac{1}{2}\left[\lr{r-%r_0}^2+z^2+\epsilon\sin\lr{m\varphi}\right]
%\end{equation}

%\begin{equation}
%\bol{w}=\sigma\lr{\Psi}\nabla\nu,
%\end{equation}

%\begin{equation}
%\lr{\nabla\cp\bol{w}}\cp\bol{w}=-\frac{1}{4\Psi-2\epsilon\sin\lr{m\varphi}}\frac{\p\sigma^2}{\p\Psi}\nabla\Psi.
%\end{equation}

\section{Example of smooth solution and relation with anisotropic magnetohydrodynamics}
In this section we construct an example of smooth solution $\bol{w}\in C^{\infty}\lr{\Omega}$ of equation \eqref{eq1} such that $\lr{\nabla\cp\bol{w}}\cp\bol{w}\neq\bol{0}$
in a toroidal domain $\Omega$.
To this end, the following observation is useful:

\begin{proposition}
Let $\Omega\subset\mathbb{R}^3$ be a %hollow 
toroidal volume with boundary $\p\Omega$ foliated by toroidal surfaces corresponding to level sets of a function $\Psi\in C^1(\bar{\Omega})$.  
Let $\bol{\xi}\in L^2_H\lr{\Omega}$ be a harmonic vector field in $\Omega$, with 
\begin{equation}
%\begin{align}
L^2_H\lr{\Omega}=\left\{\bol{\xi}\in L^2\lr{\Omega};\nabla\cp\bol{\xi}=\bol{0},\nabla\cdot\bol{\xi}=0,\bol{\xi}\cdot\bol{n}=0\right\},
%\end{align}
\end{equation}
where $\bol{n}$ denotes the unit outward normal to $\p\Omega$. 
%Consider the solution $\Psi$ of the Dirichlet boundary value problem
%\begin{equation}
%\Delta\Psi=0~~~~{\rm in}~~\Omega,~~~~\Psi=\Psi_1~~~~{\rm on}~~\p\Omega_1,~~~~\Psi=\Psi_2~~~~{\rm on}~~\p\Omega_2,
%\end{equation}
%where $\Psi_1,\Psi_2\in\mathbb{R}$, $\Psi_1\neq\Psi_2$, and $\p\Omega_1,\p\Omega_2$ denote the two disjoint toroidal surfaces bounding the hollow torus, $\p\Omega=\p\Omega_1\cup\p\Omega_2$.
Further assume that $\bol{\xi}$ is foliated by $\Psi$, that is
\begin{equation}
\bol{\xi}\cdot\nabla\Psi=0~~~~{\rm in}~~\Omega.
\end{equation}
Then, the vector field $\bol{w}\in H^1_{\sigma\sigma}\lr{\Omega}$ defined as 
\begin{equation}
\bol{w}=f\lr{\Psi}\bol{\xi},
\end{equation}
where $f$ is any $C^1(\bar{\Omega})$ function of $\Psi$, 
is a nontrivial solution of \eqref{eq1} in $\Omega$ such that
\begin{equation}
\lr{\nabla\cp\bol{w}}\cp\bol{w}=-\frac{1}{2}\frac{\p f^2}{\p\Psi}\abs{\bol{\xi}}^2\nabla\Psi,~~~~\nabla\cdot\bol{w}=0.
\end{equation} 
Here, 
\begin{equation}
H^1_{\sigma\sigma}\lr{\Omega}=\left\{\bol{w}\in L^2_{\sigma}\lr{\Omega};\nabla\cp\bol{w}\in L^2_{\sigma}\lr{\Omega}\right\},
\end{equation}
\begin{equation}
L^2_{\sigma}\lr{\Omega}=\left\{\bol{w}\in L^2\lr{\Omega};\nabla\cdot\bol{w}=0,\bol{w}\cdot\bol{n}=0\right\}.
\end{equation}
\end{proposition}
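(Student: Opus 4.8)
The plan is to establish the proposition by direct computation, since $\bol{w}=f\lr{\Psi}\bol{\xi}$ is given explicitly and $\bol{\xi}$ carries the three defining properties of a tangential harmonic field, $\nabla\cp\bol{\xi}=\bol{0}$, $\nabla\cdot\bol{\xi}=0$, $\bol{\xi}\cdot\bol{n}=0$, supplemented by the foliation constraint $\bol{\xi}\cdot\nabla\Psi=0$. Being simultaneously curl-free and divergence-free, $\bol{\xi}$ satisfies $\Delta\bol{\xi}=\bol{0}$ componentwise, so by elliptic (Weyl) regularity $\bol{\xi}\in C^{\infty}$ in the interior of $\Omega$; this justifies the pointwise vector identities below, the boundary relations being understood in the trace sense.

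First I would compute the curl. Writing $\nabla f=\lr{\p f/\p\Psi}\nabla\Psi$, since $f$ depends on $\bol{x}$ only through $\Psi$, the product rule together with $\nabla\cp\bol{\xi}=\bol{0}$ gives
\begin{equation}
\nabla\cp\bol{w}=f\lr{\Psi}\,\nabla\cp\bol{\xi}+\nabla f\cp\bol{\xi}=\frac{\p f}{\p\Psi}\,\nabla\Psi\cp\bol{\xi}.
\end{equation}
The Lorentz force then follows from the triple-product identity $\lr{\bol{a}\cp\bol{b}}\cp\bol{c}=\bol{b}\lr{\bol{a}\cdot\bol{c}}-\bol{a}\lr{\bol{b}\cdot\bol{c}}$ with $\bol{a}=\nabla\Psi$ and $\bol{b}=\bol{c}=\bol{\xi}$:
\begin{equation}
\lr{\nabla\cp\bol{w}}\cp\bol{w}=f\,\frac{\p f}{\p\Psi}\lrs{\bol{\xi}\lr{\nabla\Psi\cdot\bol{\xi}}-\nabla\Psi\,\abs{\bol{\xi}}^2}=-\frac{1}{2}\frac{\p f^2}{\p\Psi}\abs{\bol{\xi}}^2\nabla\Psi,
\end{equation}
where the foliation constraint $\bol{\xi}\cdot\nabla\Psi=0$ annihilates the first bracketed term and $f\,\p f/\p\Psi=\frac{1}{2}\,\p f^2/\p\Psi$. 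This is the asserted expression.

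Two consequences are then immediate. Because the force is a scalar multiple of $\nabla\Psi$, crossing it with $\nabla\Psi$ yields $\lrs{\lr{\nabla\cp\bol{w}}\cp\bol{w}}\cp\nabla\Psi=\bol{0}$, the first equation of \eqref{eq1}; and the product rule with $\nabla\cdot\bol{\xi}=0$ gives $\nabla\cdot\bol{w}=\lr{\p f/\p\Psi}\nabla\Psi\cdot\bol{\xi}=0$ by the same foliation constraint, the second equation of \eqref{eq1}. Hence $\bol{w}$ solves \eqref{eq1}; it is nontrivial whenever $f\,\bol{\xi}\neq\bol{0}$, and the force is genuinely nonzero when $\p f^2/\p\Psi\neq0$.

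It remains to confirm the function-space memberships, and the one point I expect to require care is the normal trace of $\nabla\cp\bol{w}$ on $\p\Omega$. Boundedness of $f\in C^1(\bar\Omega)$ and $\bol{\xi}\in L^2\lr{\Omega}$ give $\bol{w}\in L^2\lr{\Omega}$, and with $\nabla\cdot\bol{w}=0$ and $\bol{w}\cdot\bol{n}=f\lr{\Psi}\,\bol{\xi}\cdot\bol{n}=0$ we obtain $\bol{w}\in L^2_{\sigma}\lr{\Omega}$. Similarly boundedness of $\p f/\p\Psi$ and of $\nabla\Psi$ (from $\Psi\in C^1(\bar\Omega)$) gives $\nabla\cp\bol{w}\in L^2\lr{\Omega}$, while $\nabla\cdot\lr{\nabla\cp\bol{w}}=0$ holds identically, so its normal trace is well defined. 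The key observation is that, $\p\Omega$ being a level set of $\Psi$, the outward normal is $\bol{n}=\nabla\Psi/\abs{\nabla\Psi}$, whence
\begin{equation}
\lr{\nabla\cp\bol{w}}\cdot\bol{n}=\frac{\p f}{\p\Psi}\,\lr{\nabla\Psi\cp\bol{\xi}}\cdot\frac{\nabla\Psi}{\abs{\nabla\Psi}}=0,
\end{equation}
since $\nabla\Psi\cp\bol{\xi}\perp\nabla\Psi$. Thus $\nabla\cp\bol{w}\in L^2_{\sigma}\lr{\Omega}$ and therefore $\bol{w}\in H^1_{\sigma\sigma}\lr{\Omega}$. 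The argument presents no genuine obstacle; the only real subtlety is that $\bol{\xi}$ is a priori merely $L^2$, which is resolved by the interior smoothness of harmonic fields noted at the outset.
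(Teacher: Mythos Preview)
Your proof is correct and follows exactly the approach indicated in the paper, which merely states that the result ``follows by direct evaluation of $\lr{\nabla\cp\bol{w}}\cp\bol{w}$ and $\nabla\cdot\bol{w}$.'' You have carried out precisely that direct evaluation, and in addition supplied the (routine) verification of the function-space memberships that the paper leaves implicit.
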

The proof of the above statement 
follows by direct evaluation of
$\lr{\nabla\cp\bol{w}}\cp\bol{w}$ and $\nabla\cdot\bol{w}$. 
Recall that the dimension of the linear space $L^2_H\lr{\Omega}$ is given by the genus of $\p\Omega$. 
For a toroidal surface with genus 1 
the space of harmonic vector fields $\bol{\xi}\in L^2_H\lr{\Omega}$ is therefore 1-dimensional. 
We refer the reader to
\cite{Schwarz,Pfeff} for additional details
on harmonic vector fields,  
which arise in the context of    
Hodge decomposition of differential forms.

Proposition 1 suggests that 
solutions of equations \eqref{eq1}
can be obtained by identifying harmonic vector fields foliated by toroidal surfaces. The prototypical example of such vector field is the gradient $\bol{\xi}_0=\nabla\varphi$ of the toroidal angle $\varphi$, which is tangential 
to axially symmetri tori parametrized by $\Psi_0=\frac{1}{2}\left[\lr{r-r_0}^2+z^2\right]$. 
To break axial symmetry, we proceed as follows. 
First, we perturb the toroidal angle according to 
\begin{equation}
\eta=\varphi+\epsilon \sigma, 
\end{equation}
where $\sigma$ is chosen to be a harmonic function  
so that the vector field
\begin{equation}
\bol{\xi}_{\epsilon}=\nabla\eta=\nabla\varphi+\epsilon\nabla\sigma\in L^2_H\lr{\Omega},
\end{equation}
is a harmonic vector field in a toroidal domain $\Omega$ whose precise shape has yet to be determined. In particular, for sufficiently small $\epsilon>0$, we  expect to find a function $\Psi_{\epsilon}$ 
such that $\bol{\xi}_{\epsilon}\cdot\nabla\Psi_{\epsilon}=0$ and the level sets of $\Psi_{\epsilon}$ define toroidal surfaces. Indeed, the limit $\epsilon\rightarrow 0$ corresponds to the axially symmetric case of the vector field $\bol{\xi}_0=\nabla\varphi$ tangential to contours of $\Psi_0$. A simple choice for the perturbation is the harmonic function  $\sigma=r^{m}\cos \lr{m\varphi}$, $m\in\mathbb{Z}$, $m\neq 0$. For example, take $m=1$ so that $\sigma=r\cos\varphi=x$. Then, 
the following orthogonality condition must be solved for $\Psi_{\epsilon}$, 
\begin{equation}
\bol{\xi}_{\epsilon}\cdot\nabla\Psi_{\epsilon}=\frac{1}{r^2}\lr{1-\epsilon r\sin\varphi}\frac{\p\Psi_{\epsilon}}{\p\varphi}+\epsilon\cos\varphi\frac{\p\Psi_{\epsilon}}{\p r}=0.
\end{equation}
One can verify that a solution is given by the function
%Harmonic vector field foliated by toroidal surfaces, 
%then $\sigma\lr{\Psi}\bol{\xi}$ solves \eqref{eq1}. 
\begin{equation}
\Psi_{\epsilon}=\frac{1}{2}\left[\lr{re^{-\epsilon y}-r_0}^2+z^2\right],\label{psiepx}
\end{equation}
where $r_0>0$ is a real constant.
Contours of $\Psi_{\epsilon}$ define toroidal surfaces as shown in figure \ref{fig4}. 
Notice also that both $\Psi_{\epsilon}$ and the vector field
\begin{equation}
\bol{w}=f\lr{\Psi_{\epsilon}}\bol{\xi}_{\epsilon},\label{wxi}
\end{equation}
are smooth within the toroidal volume $\Omega$ enclosed by $\Psi_{\epsilon}$
for a suitable choice of $f\lr{\Psi_{\epsilon}}$.  In addition, 
\begin{equation}
\lr{\nabla\cp\bol{w}}\cp\bol{w}=-\frac{1}{2}\frac{\p f^2}{\p\Psi_{\epsilon}}
\frac{\lr{1-\epsilon y}^2+\epsilon^2x^2}{r^2}
\nabla\Psi_{\epsilon},~~~~\nabla\cdot\bol{w}=0.
\end{equation}
A plot of the vector field \eqref{wxi} for $f=\exp\left\{\Psi_{\epsilon}/2\right\}$ is given in figure \ref{fig4}. 
It should be noted that both $\bol{w}$ and $\nabla\cp\bol{w}$ diverge when $r\rightarrow 0$, a fact that makes the constructed solution unphysical in $\mathbb{R}^3$ (an infinite current $\nabla\cp\bol{w}$ would be needed on the vertical axis to sustain such a magnetic field $\bol{w}$). Nevertheless, this divergence 
is not worrisome as it is analogous to the 
divergence of the magnetic field $\bol{B}\propto\nabla\varphi$ generated by
a straight current flowing along the vertical axis.
Notice also that $\Psi_{\epsilon}$ and $\abs{\nabla\Psi_{\epsilon}}$ diverge at large distances from the origin, but the corresponding divergences in $\bol{w}$ and $\nabla\cp\bol{w}$ can be suppressed by appropriate choice of $f$.   
\begin{figure}[h]
\hspace*{-0cm}\centering
\includegraphics[scale=0.5]{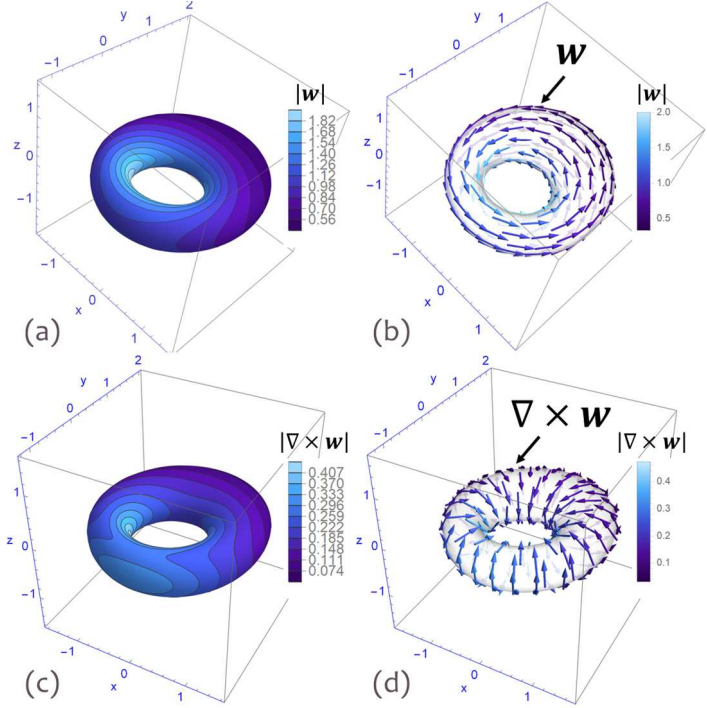}%0.220
\caption{\footnotesize (a) Contour plot of $\abs{\bol{w}}$ over the level set $\Psi_{\epsilon}=0.08$. 
(b) Vector plot of $\bol{w}$ over the level set $\Psi_{\epsilon}=0.08$. 
(c) Contour plot of $\abs{\nabla\cp\bol{w}}$ over the level set $\Psi_{\epsilon}=0.08$. 
(d) Vector plot of $\nabla\cp\bol{w}$ over the level set $\Psi_{\epsilon}=0.08$. 
In (a), (b), (c) and (d) $\Psi_{\epsilon}$ is defined by equation \eqref{psiepx} with $r_0=1$ and $\epsilon=0.18$, and 
$\bol{w}$ is defined by equation \eqref{wxi} with $f=\exp\left\{\Psi_{\epsilon}/2\right\}$ and  $\bol{\xi}_{\epsilon}=\nabla\lr{\varphi+\epsilon r\cos\varphi}$. Observe that the vector field $\bol{w}$ shown here is a solution of \eqref{eq1} such that both the bounding surface $\Psi_{\epsilon}$ and the vector field $\bol{w}$ are not invariant under continuous Euclidean isometries.}
\label{fig4}
\end{figure}

Let us now verify that 
both $\Psi_{\epsilon}$ and the vector field $\bol{w}$ defined in equation \eqref{wxi}
are not endowed with continuous Euclidean isometries. 
Following the same procedure 
of section 2, we must determine 
constant vectors $\bol{a},\bol{b}\in\mathbb{R}^3$ 
such that the Lie derivative below vanishes, 
\begin{equation}
\begin{split}
\mf{L}_{\bol{a}+\bol{b}\cp\bol{x}}\Psi_{\epsilon}=&
e^{-\epsilon y}\frac{r e^{-\epsilon y}-r_0}{r}\left[\lr{a_x+b_yz-b_zy}
x+\lr{a_y+b_zx-b_x z}\lr{y-\epsilon r^2}
\right]\\&
+\lr{a_z+b_xy-b_yx}z=0
%\lr{a_x+b_yz-b_zy}\frac{y\frac{\epsilon\cos\varphi}{1-\epsilon\sin\varphi}-x}{r^2}+\lr{a_y+b_zx-b_xz}\frac{x\frac{\epsilon\cos\varphi}{\epsilon\sin\varphi-1}-y}{r^2}
%+\lr{a_z+b_xy-b_yx}z=0.
\end{split}
\end{equation}
Considering the section $\varphi=\pi/2$, 
one obtains the condition
\begin{equation}
\mf{L}_{\bol{a}+\bol{b}\cp\bol{x}}\Psi_{\epsilon}=
%-\frac{a_y}{r}+b_x\frac{z}{r}+a_z z+b_x rz
\lr{a_y-b_x z}e^{-\epsilon r}\lr{r e^{-\epsilon r}-r_0}\lr{1-\epsilon r}+z\lr{a_z+b_x r}=0.
\end{equation}
When $z=0$, the expression above holds only if $a_y=0$. 
Then, for $z\neq 0$ it follows that $a_z=b_x=0$ as well. 
Similarly, at $\varphi=0$, one has
\begin{equation}
\mf{L}_{\bol{a}+\bol{b}\cp\bol{x}}\Psi_{\epsilon}=
%-\frac{a_x}{r}-b_y\frac{z}{r}-\epsilon b_z-zrb_y
\lr{r-r_0}\lr{a_x+b_yz-\epsilon b_z r^2}-b_y rz=0.
\end{equation}
When $z=0$, this quantity vanishes for arbitrary $r$ provided that $a_x=b_z=0$. 
We therefore conclude that $b_y=0$ as well, and $\Psi_{\epsilon}$ is not invariant under continuous Euclidean isometries. 
To ascertain that the vector field  $\bol{w}=f\lr{\Psi_{\epsilon}}\bol{\xi}_{\epsilon}$ is not invariant under the same class of transformations, it is sufficient to study the symmetry of its modulus. 
Indeed, by standard vector identities 
\begin{equation}
\bol{w}\cdot\mf{L}_{\bol{a}+\bol{b}\cp\bol{x}}\bol{w}=\frac{1}{2}\mf{L}_{\bol{a}+\bol{b}\cp\bol{x}}\bol{w}^2.
\end{equation}
Hence, if the Lie-derivative of the modulus $\mf{L}_{\bol{a}+\bol{b}\cp\bol{x}}\bol{w}^2$ does not vanish, the Lie-derivative of the vector field $\mf{L}_{\bol{a}+\bol{b}\cp\bol{x}}\bol{w}$ does not vanish as well. Next, observe that 
\begin{equation}
\mf{L}_{\bol{a}+\bol{b}\cp\bol{x}}\bol{w}^2=
\bol{w}^2\left[\lr{\bol{a}+\bol{b}\cp\bol{x}}\cdot\nabla \log f^2+\lr{\bol{a}+\bol{b}\cp\bol{x}}\cdot\nabla\log\abs{\bol{\xi_{\epsilon}}}^2\right]=0.
\end{equation}
Consider, for example, the case $f^2=\exp\left\{\Psi_{\epsilon}\right\}$. 
We have
\begin{equation}
\begin{split}
\mf{L}_{\bol{a}+\bol{b}\cp\bol{x}}\bol{w}^2=&
\bol{w}^2\left\{
\lr{a_x+b_yz-b_zy}\left[
xe^{-\epsilon y}\frac{re^{-\epsilon y}-r_0}{r}+2x\lr{\frac{\epsilon^2}{\lr{1-\epsilon y}^2+\epsilon^2 x^2}-\frac{1}{r^2}}
\right]\right.\\
&+\lr{a_y+b_zx-b_xz}\left[e^{-\epsilon y}\frac{re^{-\epsilon y}-r_0}{r}\lr{y-\epsilon r^2}-2\left(\frac{y}{r^2}+\epsilon\frac{1-\epsilon y}{\lr{1-\epsilon y}^2+\epsilon^2 x^2}\right)\right]
\\ &\left.
%-\frac{1}{r}\lr{a_y+\frac{2\epsilon}{1-\epsilon}a_x}+\lr{b_x-\frac{2\epsilon}{1-\epsilon}b_y}\frac{z}{r}+a_zz+b_xrz+\frac{2\epsilon^2}{\lr{1-\epsilon}^2}\frac{a_x}{r^3}-\frac{2\epsilon^2b_y}{\lr{1-\epsilon}^2}\frac{z}{r^3}\right.\\&+\left. \frac{2\epsilon b_z}{1-\epsilon}+\left[\frac{2}{1-\epsilon}a_y-\frac{2\epsilon^2}{\lr{1-\epsilon}^2}b_z\right]\frac{1}{r^2}-\frac{2b_x}{1-\epsilon}\frac{z}{r^2}
\lr{a_z+b_xy-b_yx}z
\right\}=0.
\end{split}
\end{equation}
On the section $\varphi=\pi/2$, we obtain the condition
\begin{equation}
\mf{L}_{\bol{a}+\bol{b}\cp\bol{x}}\bol{w}^2=\bol{w}^2\left\{
\lr{a_y-b_xz}\left[e^{-\epsilon r}\lr{r e^{-\epsilon r}-r_0}\lr{1-\epsilon r}-2\lr{\frac{1}{r}+\frac{\epsilon}{{1-\epsilon r}}}\right]
+\lr{a_z+b_xr}z
\right\}.
\end{equation}
Setting $z=0$ leads to $a_y=0$. 
Since $r$ and $z$ are not constants, 
it thus follows that 
the equation above can be satisfied only if $a_z=b_x=0$ as well.
Next, on the section $\varphi=0$ we have
\begin{equation}
%\begin{split}
\mf{L}_{\bol{a}+\bol{b}\cp\bol{x}}\bol{w}^2=\bol{w}^2\left\{
\lr{a_x+b_yz}\left[r-r_0+2r\lr{\frac{\epsilon^2}{1+\epsilon^2 r^2}-\frac{1}{r^2}}
\right]-\epsilon b_zr\left[ r\lr{r-r_0}+\frac{2}{1+\epsilon^2 r^2}\right]
-b_yrz
\right\}.
%\end{split}
\end{equation}
Considering the case $z=0$, it follows that $a_x=b_z=0$. But then $b_y=0$ as well. 
%$\bol{a}=\bol{b}=\bol{0}$. 
Hence, for $f^2=\exp\left\{\Psi_{\epsilon}\right\}$, the modulus  $\bol{w}^2$ (and thus $\bol{w}$) is not invariant under continuous Euclidean isometries.  

The solution constructed above can be generalized to a wider class of solutions with the aid of two-dimensional harmonic conjugate functions $P\lr{x,y}$ and $Q\lr{x,y}$ such that $\p P/\p x=\p Q/\p y$ and $\p P/\p y=-\p Q/\p x$. Explicitly, we can define the family of solutions $\bol{w}=f\lr{\Psi_{\epsilon}}\bol{\xi}_{\epsilon}$ to \eqref{eq1} with 
\begin{subequations}
\begin{align}
\bol{\xi}_{\epsilon}=&\nabla\left[\varphi+\epsilon P\lr{x,y}
%e^{mx}\cos\lr{ my}
\right],\\
\Psi_{\epsilon}=&\frac{1}{2}\left\{\left[re^{-\epsilon Q\lr{x,y}}
%e^{mx} \sin\lr{ my}}
-r_0\right]^2+z^2e^{-\epsilon S\lr{z}}\right\},
\end{align}
\end{subequations}\label{eq88}
where $S\lr{z}$ is a function of $z$. For example, setting $P=e^{mx}\cos\lr{my}$, $Q=e^{mx}\sin\lr{my}$, $S=2\sin{z}$, with $m\in\mathbb{R}$, $m\neq 0$,  generates solutions $\bol{w}=f\lr{\Psi_{\epsilon}}\bol{\xi}_{\epsilon}$ of \eqref{eq1} without continuous Euclidean isometries. We also remark that such solutions do not possess discrete Euclidean isometries (reflections) as well. To see this, first note
that $\Psi_{\epsilon}$ is no longer invariant under the transformation $z\rightarrow -z$. Invariance under other reflections can be excluded as follows. Let $\bol{n}=\lr{n_x,n_y,n_z}\in\mathbb{R}^3$, $\bol{n}\neq\bol{0}$, denote the unit normal of a plane corresponding to a level set of the function  $\zeta=\bol{n}\cdot\bol{x}$, i.e. $\bol{n}=\nabla \zeta$ with $\bol{n}^2=1$. %Suppose that $n_x\neq0$ and 
Next, choose $\bol{t},\bol{v}\in\mathbb{R}^3$, with $\bol{t}\cdot\bol{v}=\bol{t}\cdot\bol{n}=\bol{v}\cdot\bol{n}=0$, $\bol{t}^2=\bol{v}^2=1$,   %$\lr{\bol{t}\times\bol{v}}\times\bol{n}=\bol{0}$, 
define $\eta=\bol{t}\cdot\bol{x}$, $\theta=\bol{v}\cdot\bol{x}$, and perform the change of coordinates $\lr{x,y,z}\rightarrow\lr{\zeta,\eta,\theta}$. Notice that the set $\lr{\zeta,\eta,\theta}$ is orthonormal by construction.  
A function $\Psi_{\epsilon}\lr{\zeta,\eta,\zeta}$ is endowed with a reflection symmetry $R_{\zeta}$ by the plane $\zeta$ provided that
\begin{equation}
\Psi_{\epsilon}\lr{\zeta,\eta,\theta}=R_\zeta \Psi_{\epsilon}\lr{\zeta,\eta,\theta}=
\Psi_{\epsilon}\lr{-\zeta,\eta,\theta}.\label{rsym}
\end{equation}
Using the fact that the Cartesian coordinates $\lr{x,y,z}$ are linear functions of the new coordinates $\lr{\zeta,\eta,\theta}$, one can verify that there exist no nontrivial choice of the vector $\bol{n}$ such that \eqref{rsym} is satisfied. 
Furthermore, the function $\Psi_{\epsilon}$ cannot be  invariant under combinations  of continuous and discrete Euclidean isometries, because  after a reflection $R_{\zeta}$ one can always define a new set of Cartesian coordinates $\bol{x}'=\lr{R_{\zeta}x,R_{\zeta}y,R_{\zeta}z}$ that preserve the functional form of $\Psi_{\epsilon}$, i.e. $R_{\zeta}\Psi_{\epsilon}=\Psi_{\epsilon}\lr{x\rightarrow R_{\zeta}x,y\rightarrow R_{\zeta}y,z\rightarrow R_{\zeta}z}$, implying that $\mf{L}_{\bol{a}+\bol{b}\times\bol{x}'}\Psi_{\epsilon}=\bol{0}$ if and only if $\bol{a}=\bol{b}=\bol{0}$. 
 
It should be noted that the vector fields \eqref{wxi}  constructed above can be regarded as steady solutions of anisotropic magnetohydrodynamics,
\begin{equation}
\lr{\nabla\cp\bol{w}}\cp\bol{w}=\nabla\cdot\Pi,~~~~\nabla\cdot\bol{w}=0~~~~{\rm in}~~\Omega,\label{eq3}
\end{equation}
where the Cartesian components of the pressure tensor $\Pi$ are given by \cite{SatoQS1,Grad67}
\begin{equation}
\Pi^{ij}=\lr{P-\frac{1}{2}\gamma \bol{w}^2}\delta^{ij}+\gamma w^i w^j,~~~~i,j=1,2,3,
\end{equation}
with $P$ a reference pressure field
and $\gamma$ the pressure anisotropy.  
Notice that equation \eqref{eq2} is recovered for $\gamma=0$. The first equation in \eqref{eq3} expressing anisotropic force balance can be written as
\begin{equation}
\lr{1-\gamma}\lr{\nabla\cp\bol{w}}\cp\bol{w}=\nabla P-\frac{1}{2}\bol{w}^2\nabla\gamma+\lr{\bol{w}\cdot\nabla\gamma}\bol{w}.\label{eq32}
\end{equation}
Evidently, the vector field \eqref{wxi} satisfies \eqref{eq32} with
\begin{equation}
P=0,~~~~\gamma=1-\frac{1}{f^2}.
\end{equation}

Finally, we remark that \eqref{wxi} 
is well defined along the toroidal axis $re^{-\epsilon Q}\rightarrow r_0$, $z\rightarrow 0$ provided that
$f$ exists in this limit (this is the case of $f^2=\exp\left\{ \Psi_{\epsilon}\right\}$ considered above).

%More generally, we have the following:
%\begin{proposition}
%Suppose that $\bol{w}$ is a solution of \eqref{eq1}. 
%Then, $\bol{w}$ is also a solution of anisotropic magnetohydrodynamics \eqref{eq3} with
%\begin{equation}
%P=0,~~~~\gamma=1-\frac{1}{f^2}.
%\end{equation}
%\end{proposition}
%The proof of this statement can be
%follows by observing that $$

\section{Considerations on magnetohydrodynamic equilibria, steady Euler flows, and quasisymmetry}

In this last section we discuss some aspects pertaining to the application of the theory developed in the previous sections to the analysis of equation 
\eqref{eq2}. 

First recall that solutions $\bol{w}$ of equation \eqref{eq2} are solutions of equation \eqref{eq1}. 
Therefore, the space of solutions of equation \eqref{eq2} is a subset of the space of solutions of equation \eqref{eq1}. 
Next, observe that the difference between equation \eqref{eq1} and equation \eqref{eq2} is that 
while in the former the vector field $\lr{\nabla\cp\bol{w}}\cp\bol{w}$ is only required to lie along $\nabla\Psi$, in the latter 
these two vector fields must coincide. 
Hence, in addition to the orthogonality between $\nabla\cp\bol{w}$ and $\nabla\Psi$ as described by equation \eqref{eq1X}, a further condition exists on the
magnitude of the component of $\lr{\nabla\cp\bol{w}}\cp\bol{w}$ along $\nabla\Psi$. 
In particular, enforcing the Clebsch representation
$\bol{w}=\nabla\Psi\cp\nabla\Theta$, equation \eqref{eq2} can be written as
\begin{equation}
\left[\nabla\cp\lr{\nabla\Psi\cp\nabla\Theta}\right]\cp\lr{\nabla\Psi\cp\nabla\Theta}=\left[\nabla\Theta\cdot\nabla\cp\lr{\nabla\Psi\cp\nabla\Theta}\right]\nabla\Psi
-\left[\nabla\Psi\cdot\nabla\cp\lr{\nabla\Psi\cp\nabla\Theta}\right]\nabla\Theta=\nabla\Psi.
\end{equation}
Hence, one obtains the system of equations
\begin{subequations}
\begin{align}
&\nabla\cdot\left[\nabla\Theta\cp\lr{\nabla\Psi\cp\nabla\Theta}\right]=-1,~~~~\nabla\cdot\left[\nabla\Psi\cp\lr{\nabla\Theta\cp\nabla\Psi}\right]=0~~~~{\rm in}~~\Omega,\label{redeq2a}\\&\Psi={\rm constant}~~~~{\rm on}~~\p\Omega.\label{redeq2b}
\end{align}\label{redeq2}
\end{subequations}
While in the study of equation \eqref{redeq0} the function $\Psi$ was given, it is convenient 
to regard system \eqref{redeq2} as coupled
partial differential equations for the
unknowns $\Psi$ and $\Theta$. 
Indeed, one expects that fixing $\Psi$ will prevent, in general, the existence of regular solutions 
$\Theta$ fulfilling both equations in \eqref{redeq2a}. 
Notice that boundary conditions \eqref{redeq2b} on $\Psi$ have been imposed to ensure that $\bol{w}\cdot\bol{n}=0$ on $\p\Omega$.  
Evidently, a solution $\lr{\Psi,\Theta}$ of system \eqref{redeq2} provides a solution $\bol{w}=\nabla\Psi\cp\nabla\Theta$ of equation \eqref{eq2}. 

It is worth observing that if the condition $\nabla\cdot\bol{w}=0$ is dropped in equation \eqref{eq2}, it is possible to find explicit solutions of $\lr{\nabla\cp\bol{w}}\cp\bol{w}=\nabla\Psi$ 
that break axial symmetry. Considering axially symmetric toroidal surfaces corresponding to level sets of $\Psi=\frac{1}{2}\left[\lr{r-r_0}^2+z^2\right]$, examples include vector fields of the type
\begin{equation}
\bol{w}=\sqrt{C-2\Psi^2}\nabla\vartheta+g\lr{\varphi}\nabla\varphi,
\end{equation}
with $\vartheta=\arctan\lr{z/\lr{r-r_0}}$ the poloidal angle, 
$\varphi=\arctan\lr{y/x}$ the toroidal angle, $C>0$
a sufficiently large real constant, and $g$ any periodic function of $\varphi$.  

As in the case of equation \eqref{redeq0}, equations
\eqref{redeq2a} admits a variational formulation.  
The target energy functional is
\begin{equation}
E_{\Omega}'=\int_{\Omega}\lr{\frac{1}{2}\abs{\nabla\Psi\cp\nabla\Theta}^2-\Psi}\,dV.\label{Hprime}
\end{equation}
where the variable $\Psi=P$ plays the role of the mechanical pressure $P$ in the context of magnetohydrodynamics, and corresponds to the sum  $\Psi=-P-\frac{1}{2}\bol{v}^2$ in the hydrodynamic  interpretation with $\bol{v}$ the fluid velocity.  
Assuming $\delta\Psi=\delta\Theta=0$ on $\p\Omega$, we have
\begin{equation}
%\begin{split}
\delta E_{\Omega}'=-\int_{\Omega}\delta\Psi\left\{1+\nabla\cdot\left[\nabla\Theta\cp\lr{\nabla\Psi\cp\nabla\Theta}\right]\right\}\,dV-\int_{\Omega}\delta\Theta 
\nabla\cdot\left[\nabla\Psi\cp\lr{\nabla\Theta\cp\nabla\Psi}\right]\,dV.
%\end{split}
\end{equation}
Hence, stationary points of the functional $E_{\Omega}'$ assign solutions of \eqref{redeq2a}. 
Now suppose that solutions $\lr{\Psi,\Theta}$ of \eqref{redeq2} are sought 
in the Sobolev space $H^1\lr{\Omega}$ with norm $\Norm{\cdot}_{H^1\lr{\Omega}}$. 
From \eqref{Hprime} it is clear that the functional $E'_{\Omega}$ is not coercive, i.e. it does not
satisfy a condition of the form 
$E_{\Omega}'\geq c_1 \Norm{\Psi}_{H^1\lr{\Omega}}^2+c_2
\Norm{\Theta}_{H^1\lr{\Omega}}^2+C$ for some constants with  $c_1,c_2,C\in\mathbb{R}$, $c_1>0$, and $c_2>0$. 
Indeed, the value of \eqref{Hprime} can be kept finite, $\abs{E'_{\Omega}}<\infty$, 
even if $\Norm{\Psi}_{H^1\lr{\Omega}},\Norm{\Theta}_{H^1\lr{\Omega}}\rightarrow\infty$ by setting $\Theta=\Psi$ while taking $\Norm{\nabla\Psi}_{L^2\lr{\Omega}}=\Norm{\nabla\Theta}_{L^2\lr{\Omega}}\rightarrow\infty$ where $\Norm{\cdot}_{L^2\lr{\Omega}}$ denotes the standard $L^2\lr{\Omega}$ norm.  
The lack of coercivity prevents
the application of variational methods \cite{Struwe} to establish the existence of a relative minimizer of $E'_{\Omega}$, and thus a solution of \eqref{redeq2} in the relevant function space. 
 
It is worth however explaining why the situation is different if the variable $\Psi$ is fixed, i.e. if one considers equation \eqref{eq1X} arising from the functional $E_{\Omega}$ of \eqref{E} in the context of equation \eqref{eq1}.  
This will also provide explicit proof of the (weak) solvability of equation \eqref{redeq0} for the unkwon $\rho$ on each toroidal surface $\Psi={\rm constant}$. 
Consider the setting of theorem 1 where $\Psi$ is smooth, %the domain $\Omega$ is enclosed by the toroidal surfaces $\Psi=\Psi_1$ and $\Psi=\Psi_2$ such that $\Psi\in U=\left[\Psi_1,\Psi_2\right]$, 
perform the change of variables $\Theta=\mu+\rho$,  
and use curvilinear coordinates  $\lr{x^1,x^2,x^3}=\lr{\mu,\nu,\Psi}$ to express $E_{\Omega}$ as follows
\begin{equation}
E_{\Omega}=\frac{1}{2}\int_{U}d\Psi\int_{D}\lr{\sum_{i,j=1}^2a_{ij}\rho_{i}\rho_{j}+2\sum_{i=1}^2a_{\mu i}\rho_{i}+a_{\mu\mu}}\,J d\mu d\nu,
\end{equation}
where $a_{ij}$ are the components of the symmetric positive definite matrix $A$ encountered in equation \eqref{A}.
Assuming that $\rho$ 
%on $\p D$ 
is periodic in $D$, 
integration by parts gives
\begin{equation}
E_{\Omega}=\frac{1}{2}\int_{U}d\Psi\int_D\left[\sum_{i,j=1}^2Ja_{ij}\rho_i\rho_j-2\rho\sum_{i=1}^2\frac{\p \lr{Ja_{\mu i}}}{\p x^i}+Ja_{\mu\mu}\right]\, d\mu d\nu.
\end{equation}
For each $\Psi\in U$ we may therefore identify an energy functional 
\begin{equation}
E_D=\frac{1}{2}\int_D\left[\sum_{i,j=1}^2Ja_{ij}\rho_i\rho_j-2\rho\sum_{i=1}^2\frac{\p \lr{Ja_{\mu i}}}{\p x^i}+Ja_{\mu\mu}\right]\, d\mu d\nu\geq
\lambda \Norm{\nabla_{\lr{\mu,\nu}}\rho}_{L^2\lr{D}}^2-2c\Norm{\rho}_{L^2\lr{D}}-C.
\end{equation}
Here, $\lambda$, $c$ and $C$ are positive real constants, 
and we used the strict ellipticity of $Ja_{ij}$. %, and introduced the notation
%\begin{equation}
%\begin{split}
%&\Norm{\rho}_{L^2\lr{D}}^2=\int_D\rho^2\,d\mu d\nu,\\&\Norm{\nabla_{\lr{\mu,\nu}}\rho}_{L^2\lr{D}}^2=\int_D\lr{\rho_{\mu}^2+\rho_{\nu}^2}\,d\mu d\nu,\\&
%\Norm{\rho}_{H^1\lr{D}}^2=\Norm{\rho}_{L^2\lr{D}}^2+\Norm{\nabla_{\lr{\mu,\nu}}\rho}_{L^2\lr{D}}^2.
%%,\\&\Norm{\rho}_{H^1\lr{D}}^2=\Norm{\rho}^2_{L^2\lr{D}}+\Norm{\nabla_{\lr{\mu,\nu}}\rho}^2_{L^2\lr{D}}.
%\end{split}
%\end{equation}
%Recalling that $\langle\rho\rangle=0$  
%and applying the Poincar\'e-Wirtinger
Recalling that  $\langle\rho\rangle=0$ and 
applying the Poincar\'e 
inequality, 
we further obtain
\begin{equation}
E_D\geq 
%\lambda \Norm{\rho}_{H_0^1\lr{D}}^2-2c'\Norm{\rho}_{H_0^1\lr{D}}-C
\frac{\lambda}{4}\Norm{\rho}^2_{H^1\lr{D}}+\lr{\frac{\lambda}{4}\Norm{\rho}_{H^1\lr{D}}-2c}\Norm{\rho}_{H^1\lr{D}}-C\geq \frac{\lambda}{4}\Norm{\rho}_{H^1\lr{D}}^2-4\frac{c^2}{\lambda}-C.
%\frac{\lambda}{2}\Norm{\rho}^2_{H^1\lr{D}}-2c\Norm{\rho}_{H^1\lr{D}}-C\geq c'\Norm{\rho}^2_{H^1\lr{D}}-C',
\end{equation}
%where $c'$ is a positive real constant.
This shows that $E_D$ is a coercive functional with respect to the $H^1\lr{D}$ norm since $E_D\rightarrow\infty$ when $\Norm{\rho}_{H^1\lr{D}}\rightarrow\infty$.  
Since $E_D$ is also sequentially lower-semicontinuous, for each $\Psi$ there exist a relative minimizer $\rho\in H^1_{\rm per}\lr{D}$ of the functional $E_D$,
which corresponds to a solution of equation \eqref{redeq0}.

We conclude this section with an observation concerning the existence of quasisymmetric solutions of equation \eqref{eq2}, i.e. solutions of equation \eqref{eq2} that satisfy the additional property
\begin{equation}
\bol{u}\cp\bol{w}=\nabla g\lr{\Psi},~~~~\bol{u}\cdot\nabla \bol{w}^2=0,~~~~\nabla\cdot\bol{u}=0~~~~{\rm in}~~\Omega,\label{QS}
\end{equation}
for some function $g\lr{\Psi}$ such that $\nabla g\neq\bol{0}$ and some vector field $\bol{u}$ called the quasisymmetry of $\bol{w}$. 
The property \eqref{QS} is a desirable feature
for the confining magnetic field in nuclear fusion reactors known as stellarators, 
because it ensures steady confinement of the burning plasma within a finite volume of space \cite{Rod}. 
In this regard, we have:
\begin{proposition}
Suppose that $\bol{\xi}\in L^2_H\lr{\Omega}$ is a harmonic vector field in a toroidal domain $\Omega$ foliated by nested toroidal surfaces corresponding to contours of a function $\Psi\in C^1(\bar{\Omega})$. Further assume that 
\begin{equation}
\bol{\xi}\cdot\nabla\Psi=0~~~~{\rm in}~~\Omega,
\end{equation}
and that $\abs{\bol{\xi}}^2=\abs{\bol{\xi}}^2\lr{\Psi}$. 
Then, the vector field $\bol{w}=f\lr{\Psi}\bol{\xi}\in H^1_{\sigma\sigma}\lr{\Omega}$, with $f\in C^1(\bar{\Omega})$,  
solves \eqref{eq2} and is quasisymmetric with quasisymmetry
\begin{equation}
\bol{u}=\bol{\xi}\cp\nabla\Psi\in L^2_{\sigma}\lr{\Omega}.
\end{equation}
\end{proposition}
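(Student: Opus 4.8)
The plan is to verify the two assertions—that $\bol{w}=f\lr{\Psi}\bol{\xi}$ solves \eqref{eq2} and that $\bol{u}=\bol{\xi}\cp\nabla\Psi$ is a quasisymmetry—by direct computation, using the force-balance formula already established in Proposition 1. The decisive ingredient is the extra hypothesis $\abs{\bol{\xi}}^2=\abs{\bol{\xi}}^2\lr{\Psi}$: it is precisely this condition that upgrades the anisotropic balance of Proposition 1 to a genuine isotropic equilibrium and, at the same time, makes $\bol{u}$ quasisymmetric. There is no deep analytic obstacle; the whole statement is a verification, and the conceptual point is to recognize what that single scalar condition buys.

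First I would establish \eqref{eq2}. Proposition 1 already gives $\lr{\nabla\cp\bol{w}}\cp\bol{w}=-\frac{1}{2}\frac{\p f^2}{\p\Psi}\abs{\bol{\xi}}^2\nabla\Psi$ together with $\nabla\cdot\bol{w}=0$. Under the hypothesis $\abs{\bol{\xi}}^2=\abs{\bol{\xi}}^2\lr{\Psi}$ the scalar coefficient $-\frac{1}{2}\frac{\p f^2}{\p\Psi}\abs{\bol{\xi}}^2$ is a function of $\Psi$ alone, so the Lorentz force is a pure gradient: setting $P\lr{\Psi}=-\frac{1}{2}\int\frac{\p f^2}{\p\Psi}\abs{\bol{\xi}}^2\,d\Psi$ yields $\lr{\nabla\cp\bol{w}}\cp\bol{w}=\nabla P$. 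Since for non-constant $f$ the level sets of $P\lr{\Psi}$ coincide with those of $\Psi$, this is exactly the isotropic equilibrium \eqref{eq2} with pressure $P$, and solenoidality is inherited directly from Proposition 1.

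Next I would check the three defining properties \eqref{QS} for $\bol{u}=\bol{\xi}\cp\nabla\Psi$. For the first, the vector triple product gives $\bol{u}\cp\bol{w}=\lr{\bol{\xi}\cp\nabla\Psi}\cp\lr{f\bol{\xi}}=f\abs{\bol{\xi}}^2\nabla\Psi-f\lr{\bol{\xi}\cdot\nabla\Psi}\bol{\xi}$, and the standing hypothesis $\bol{\xi}\cdot\nabla\Psi=0$ annihilates the last term, leaving $\bol{u}\cp\bol{w}=f\lr{\Psi}\abs{\bol{\xi}}^2\lr{\Psi}\nabla\Psi=\nabla g\lr{\Psi}$ with $g'\lr{\Psi}=f\abs{\bol{\xi}}^2$ (which is nonvanishing wherever $f\abs{\bol{\xi}}^2\neq 0$, so $\nabla g\neq\bol{0}$ as required). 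For the second, $\bol{w}^2=f^2\abs{\bol{\xi}}^2$ is a function of $\Psi$ only, so $\nabla\bol{w}^2$ is parallel to $\nabla\Psi$; since $\bol{u}=\bol{\xi}\cp\nabla\Psi$ is orthogonal to $\nabla\Psi$, one gets $\bol{u}\cdot\nabla\bol{w}^2=0$. For the third, the identity $\nabla\cdot\lr{\bol{\xi}\cp\nabla\Psi}=\nabla\Psi\cdot\lr{\nabla\cp\bol{\xi}}-\bol{\xi}\cdot\lr{\nabla\cp\nabla\Psi}$ vanishes term by term, because $\bol{\xi}$ is harmonic (hence curl-free) and the curl of a gradient is zero, giving $\nabla\cdot\bol{u}=0$.

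Finally I would confirm the function-space memberships and note where the only care is needed. The identity $\bol{u}\cdot\nabla\Psi=\lr{\bol{\xi}\cp\nabla\Psi}\cdot\nabla\Psi=0$ shows $\bol{u}$ is everywhere tangent to the level sets of $\Psi$; since $\p\Omega$ is such a level set, $\bol{u}\cdot\bol{n}=0$ on the boundary, which together with $\nabla\cdot\bol{u}=0$ places $\bol{u}\in L^2_{\sigma}\lr{\Omega}$, while $\bol{w}\in H^1_{\sigma\sigma}\lr{\Omega}$ follows already from Proposition 1. The closest thing to an obstacle is keeping track that every step legitimately invokes $\abs{\bol{\xi}}^2=\abs{\bol{\xi}}^2\lr{\Psi}$ and $\bol{\xi}\cdot\nabla\Psi=0$—these are exactly the two standing hypotheses, so each use is justified—and the substantive content of the proposition is the observation that requiring the field strength $\abs{\bol{\xi}}^2$ to be constant on flux surfaces is what converts a harmonic foliated field into a quasisymmetric solution of the isotropic equilibrium \eqref{eq2}.
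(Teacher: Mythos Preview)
Your proposal is correct and follows exactly the approach the paper indicates: direct verification of \eqref{eq2} and \eqref{QS} using the force-balance identity of Proposition 1 together with the hypothesis $\abs{\bol{\xi}}^2=\abs{\bol{\xi}}^2\lr{\Psi}$. The paper itself merely states that the proof ``can be obtained by evaluating equations \eqref{eq2} and \eqref{QS}'', so your write-up is a faithful and correctly executed expansion of that one-line sketch.
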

The proof of the above statement can be obtained by evaluating equations \eqref{eq2} and \eqref{QS}. 
We remark that, however, the
requirement $\abs{\bol{\xi}}^2=\abs{\bol{\xi}}^2\lr{\Psi}$ that the modulus of the harmonic vector field $\bol{\xi}$ is a function of $\Psi$ is a stringent condition related to the notion of isodynamic magnetic field \cite{Bern}. Therefore, the existence of such configurations is nontrivial.

\section{Concluding remarks}

In this paper, we studied equation \eqref{eq1}, 
which determines a solenoidal vector field 
with the property that both the vector field ans its curl are foliated by a family of nested toroidal surfaces. 
Equation \eqref{eq1} represents a generalization of 
an equation encountered in magnetohydrodynamics 
and fluid mechanics (equation \eqref{eq2}) describing 
equilibrium magnetic fields 
and steady Euler flows. 
At present, a general theory concerning the existence of solutions of equation \eqref{eq2} is not available due to the mathematical difficulty originating from its nontrivial characteristic surfaces.  
Analysis of the simpler problem posed by equation \eqref{eq1} may therefore provide useful insight into the nature of the space of solutions of equation \eqref{eq2}. 

In theorem 1 we showed that nontrivial solutions in the class $C^{\infty}\lr{\Omega}$ of equation \eqref{eq1}, where $\Omega$ is a hollow toroidal volume, always exist for a given family of smooth nested toroidal surfaces.
The proof relies on the reduction of equation \eqref{eq1} to a two-dimensional linear elliptic second order partial differential equation \eqref{redeq0} for each toroidal surface with the aid of Clebsch parameters. Regular periodic solutions for these equations exist by elliptic theory, and  
%the corresponding Fourier series 
can be used to determine the desired smooth solution of problem \eqref{eq1}. 
%On each toroidal surface, these solutions may exhibit discontinuities 
%on a prescribed set of curves in correspondence 
%of the angles $\mu=0$ and $\nu=0$. 
%Furthermore, the solutions are smooth in the whole $\Omega$ provided that
%a periodic solution with periodic derivatives can be found of a second-order two-dimensional linear elliptic partial differential equation with periodic coefficients. 
In section 5, an example of numerical solution was also computed, while in section 6 examples of smooth solutions  in toroidal volumes were  constructed analytically such that both the bounding surface and the solution are not invariant under continuous Euclidean isometries. 
Such solutions can be regarded as solutions of anisotropic magnetohydrodynamics \eqref{eq3}. 

The results obtained above concerning equation \eqref{eq1} entail a number of consequences for the 
problem described by equation \eqref{eq2}. 
First, the formulation of equation \eqref{eq2} in terms of Clebsch potentials (equation \eqref{redeq2}) discussed in section 7 suggests that 
simultaneous optimization of the Clebsch potentials $\Psi$ and $\Theta$ is needed to find solutions. That is, the shape of the toroidal surfaces $\Psi$ (and possibly the profile of the domain $\Omega$ itself) should be adjusted together with the variable $\Theta$ 
to accommodate the solution within $\Omega$. 
Secondly, if solutions are sought in the form $\bol{w}=f\lr{\Psi}\bol{\xi}$ 
of \eqref{wxi}, where $\bol{\xi}$ is a harmonic vector field in $\Omega$, solving \eqref{eq2} amounts to
finding a harmonic vector field that is foliated 
by toroidal surfaces $\Psi$ and such that the modulus
$\abs{\bol{\xi}}^2$ is itself a function of $\Psi$. 
Finally, as observed in proposition 2 of section 7, 
if such kind of solution could be found, it would also guarantee quasisymmetry, and thus magnetic confinement of a plasma within a finite volume of space as desirable in nuclear fusion applications. 

\section*{Acknowledgment}
N.S. is grateful to T. Yokoyama for useful discussion. 

\section*{Statements and declarations}

\subsection*{Data availability}
Data sharing not applicable to this article as no datasets were generated or analysed during the current study.

\subsection*{Funding}
The research of NS was partially supported by JSPS KAKENHI Grant No. 21K13851.
%and 22H04936.

\subsection*{Competing interests} 
The authors have no competing interests to declare that are relevant to the content of this article.

%\section*{Data Availability}

%The data that support the findings of this study are available from
%the corresponding author upon reasonable request.	

\end{document}